
\documentclass[11pt]{amsart}
\usepackage{amsopn}
\usepackage{amssymb, amscd}

\topmargin 0cm
\evensidemargin 0.5cm
\oddsidemargin 0.5cm
\textwidth 15cm \textheight 23cm

\newcommand{\nc}{\newcommand}

\nc{\fg}{\mathfrak{f} } \nc{\vg}{\mathfrak{v} } \nc{\wg}{\mathfrak{w} }
\nc{\zg}{\mathfrak{z} } \nc{\ngo}{\mathfrak{n} } \nc{\kg}{\mathfrak{k} }
\nc{\mg}{\mathfrak{m} } \nc{\bg}{\mathfrak{b} } \nc{\ggo}{\mathfrak{g} }
\nc{\ggob}{\overline{\mathfrak{g}} } \nc{\sog}{\mathfrak{so} }
\nc{\sug}{\mathfrak{su} } \nc{\spg}{\mathfrak{sp} } \nc{\slg}{\mathfrak{sl} }
\nc{\glg}{\mathfrak{gl} } \nc{\cg}{\mathfrak{c} } \nc{\rg}{\mathfrak{r} }
\nc{\hg}{\mathfrak{h} } \nc{\tg}{\mathfrak{t} } \nc{\ug}{\mathfrak{u} }
\nc{\dg}{\mathfrak{d} } \nc{\ag}{\mathfrak{a} } \nc{\pg}{\mathfrak{p} }
\nc{\sg}{\mathfrak{s} } \nc{\affg}{\mathfrak{aff} }

\nc{\pca}{\mathcal{P}} \nc{\nca}{\mathcal{N}} \nc{\lca}{\mathcal{L}}
\nc{\oca}{\mathcal{O}} \nc{\mca}{\mathcal{M}} \nc{\tca}{\mathcal{T}}
\nc{\aca}{\mathcal{A}} \nc{\cca}{\mathcal{C}} \nc{\gca}{\mathcal{G}}
\nc{\sca}{\mathcal{S}} \nc{\hca}{\mathcal{H}} \nc{\bca}{\mathcal{B}}
\nc{\dca}{\mathcal{D}} \nc{\val}{\operatorname{val}}

\nc{\vp}{\varphi} \nc{\ddt}{\frac{d}{dt}} \nc{\dds}{\frac{d}{ds}}
\nc{\dpar}{\frac{\partial}{\partial t}} \nc{\im}{\mathtt{i}}

\nc{\SO}{\mathrm{SO}} \nc{\Spe}{\mathrm{Sp}} \nc{\Sl}{\mathrm{SL}}
\nc{\SU}{\mathrm{SU}} \nc{\Or}{\mathrm{O}} \nc{\U}{\mathrm{U}} \nc{\Gl}{\mathrm{GL}}
\nc{\Se}{\mathrm{S}} \nc{\Cl}{\mathrm{Cl}} \nc{\Spein}{\mathrm{Spin}}
\nc{\Pin}{\mathrm{Pin}} \nc{\G}{\mathrm{GL}_n(\RR)} \nc{\g}{\mathfrak{gl}_n(\RR)}

\nc{\RR}{{\Bbb R}} \nc{\HH}{{\Bbb H}} \nc{\CC}{{\Bbb C}} \nc{\ZZ}{{\Bbb Z}}
\nc{\FF}{{\Bbb F}} \nc{\NN}{{\Bbb N}} \nc{\QQ}{{\Bbb Q}} \nc{\PP}{{\Bbb P}}

\nc{\vs}{\vspace{.2cm}} \nc{\vsp}{\vspace{1cm}} \nc{\ip}{\langle\cdot,\cdot\rangle}
\nc{\ipp}{(\cdot,\cdot)} \nc{\la}{\langle} \nc{\ra}{\rangle} \nc{\unm}{\tfrac{1}{2}}
\nc{\unc}{\tfrac{1}{4}} \nc{\und}{\tfrac{1}{16}} \nc{\no}{\vs\noindent}
\nc{\lam}{\Lambda^2(\RR^n)^*\otimes\RR^n} \nc{\tangz}{{\rm T}^{\rm Zar}}
\nc{\nor}{{\sf n}}  \nc{\mum}{/\!\!/} \nc{\kir}{/\!\!/\!\!/}
\nc{\Ri}{\tfrac{4\Ric_{\mu}}{||\mu||^2}} \nc{\ds}{\displaystyle}
\nc{\ben}{\begin{enumerate}} \nc{\een}{\end{enumerate}} \nc{\f}{\frac}
\nc{\lb}{[\cdot,\cdot]} \nc{\isn}{\tfrac{1}{||v||^2}}
\nc{\gkp}{(\ggo=\kg\oplus\pg,\ip)} \nc{\ukh}{(\ug=\kg\oplus\hg,\ip)}
\nc{\tgkp}{(\tilde{\ggo}=\kg\oplus\pg,\ip)}
\nc{\wt}{\widetilde} \nc{\mm}{M}

\nc{\Hess}{\operatorname{Hess}} \nc{\ad}{\operatorname{ad}}
\nc{\Ad}{\operatorname{Ad}} \nc{\rank}{\operatorname{rank}}
\nc{\Irr}{\operatorname{Irr}} \nc{\End}{\operatorname{End}}
\nc{\Aut}{\operatorname{Aut}} \nc{\Inn}{\operatorname{Inn}}
\nc{\Der}{\operatorname{Der}} \nc{\Ker}{\operatorname{Ker}}
\nc{\Iso}{\operatorname{I}} \nc{\Diff}{\operatorname{Diff}}
\nc{\Lie}{\operatorname{L}} \nc{\tr}{\operatorname{tr}} \nc{\dif}{\operatorname{d}}
\nc{\sen}{\operatorname{sen}} \nc{\modu}{\operatorname{mod}}
\nc{\CRic}{\operatorname{PP}} \nc{\Cric}{\operatorname{P}} \nc{\Ricci}{\operatorname{Ric}}
\nc{\sym}{\operatorname{sym}} \nc{\symac}{\operatorname{sym^{ac}}}
\nc{\symc}{\operatorname{sym^{c}}} \nc{\scalar}{\operatorname{sc}}
\nc{\grad}{\operatorname{grad}} \nc{\ricci}{\operatorname{Rc}}
\nc{\Nor}{\operatorname{Norm}}  \nc{\ricc}{\operatorname{Rc^{c}}} \nc{\Ricc}{\operatorname{Ric^{c}}} \nc{\ricac}{\operatorname{Rc^{ac}}} \nc{\Ricac}{\operatorname{Ric^{ac}}} \nc{\Riem}{\operatorname{Rm}}
\nc{\riccig}{\operatorname{ric^{\gamma}}} \nc{\Rin}{\operatorname{M}}
\nc{\Le}{\operatorname{L}} \nc{\tang}{\operatorname{T}}
\nc{\level}{\operatorname{level}} \nc{\rad}{\operatorname{r}}
\nc{\abel}{\operatorname{ab}} \nc{\CH}{\operatorname{CH}}
\nc{\mcc}{\operatorname{mcc}} \nc{\Adj}{\operatorname{Adj}}
\nc{\Order}{\operatorname{O}}  \nc{\inj}{\operatorname{inj}} \nc{\proy}{\operatorname{pr}}
\nc{\vol}{\operatorname{vol}} \nc{\Diag}{\operatorname{Diag}}
\nc{\spa}{\operatorname{span}}

\theoremstyle{plain}
\newtheorem{theorem}{Theorem}[section]
\newtheorem{proposition}[theorem]{Proposition}
\newtheorem{corollary}[theorem]{Corollary}
\newtheorem{lemma}[theorem]{Lemma}

\theoremstyle{definition}
\newtheorem{definition}[theorem]{Definition}

\theoremstyle{remark}
\newtheorem{remark}[theorem]{Remark}

\newtheorem{example}[theorem]{Example}

\title[]{On the Chern-Ricci flow and its solitons for Lie groups}

\author{Jorge Lauret} \author{Edwin Alejandro Rodr\'\i guez Valencia}

\address{Universidad Nacional de C\'ordoba, FaMAF and CIEM, 5000 C\'ordoba, Argentina}
\email{lauret@famaf.unc.edu.ar}  \email{earodriguez@famaf.unc.edu.ar}

\thanks{This research was partially supported by grants from CONICET, FONCYT and SeCyT (Univ. Nac. C\'ordoba)}

\begin{document}

\maketitle

\begin{abstract}
This paper is concerned with Chern-Ricci flow evolution of left-invariant hermitian structures on Lie groups.  We study the behavior of a solution, as $t$ is approaching the first time singularity, by rescaling in order to prevent collapsing and obtain convergence in the pointed (or Cheeger-Gromov) sense to a Chern-Ricci soliton.  We give some results on the Chern-Ricci form and the Lie group structure of the pointed limit in terms of the starting hermitian metric and, as an application, we obtain a complete picture for the class of solvable Lie groups having a codimension one normal abelian subgroup.  We have also found a Chern-Ricci soliton hermitian metric on most of the complex surfaces which are solvmanifolds, including an unexpected shrinking soliton example.
\end{abstract}

\tableofcontents

\section{Introduction}\label{intro}

The {\it Chern-Ricci flow} (CRF) is the evolution equation for a one-parameter family $\omega(t)$ of hermitian metrics on a fixed complex manifold $(M,J)$ defined by
\begin{equation}\label{CRF-intro}
\dpar \omega=-2p, \qquad\mbox{or equivalently}, \qquad \dpar g=-2p(\cdot,J\cdot),
\end{equation}
where $p=p(J,\omega(t))$ is the Chern-Ricci form and $g=\omega(\cdot,J\cdot)$ (see \cite{Gll,TstWnk,TstWnk2}).  This paper is concerned with CRF-flow evolution of (compact) hermitian manifolds $(M,J,\omega)$ whose universal cover is a Lie group $G$ and such that if $\pi:G\longrightarrow M$ is the covering map, then $\pi^*J$ and $\pi^*\omega$ are left-invariant.  This is in particular the case of invariant structures on a quotient $M=G/\Gamma$, where $\Gamma$ is a cocompact discrete subgroup of $G$ (e.g. solvmanifolds and nilmanifolds).  A CRF-flow solution on $M$ is obtained by pulling down the corresponding CRF-flow solution on the Lie group $G$, which by diffeomorphism invariance stays left-invariant.  Equation \eqref{CRF-intro} therefore becomes an ODE for a non-degenerate $2$-form $\omega(t)$ on the Lie algebra $\ggo$ of $G$ and thus short-time existence (forward and backward) and uniqueness of the solutions are always guaranteed (see \cite{SCF}).

Let $(G,J)$ be a Lie group endowed with a left-invariant complex structure. Since on Lie groups the Chern-Ricci form $p$ depends only on $J$ (see \eqref{CRform}), we obtain that along the CRF-solution starting at a left-invariant hermitian metric $\omega_0$,  $p(t)\equiv p_0:=p(J,\omega_0)$.  This implies that $\omega(t)$ is simply given by
$$
\omega(t)=\omega_0-2tp_0.
$$
If $P_0$ is the Chern-Ricci operator of $\omega_0$ (i.e. $p_0=\omega_0(P_0\cdot,\cdot)$), then
$$
\omega(t)=\omega_0((I-2tP_0)\cdot,\cdot),
$$
and so the solution exists as long as the hermitian map $I-2tP_0$ is positive, say on a maximal interval $(T_-,T_+)$, which can be easily computed in terms of the extremal eigenvalues of the symmetric operator $P_0$.

We aim to understand the behavior of a CRF-solution $(G,\omega(t))$, as $t$ is approaching $T_\pm$, in the same spirit as in \cite[Section 3]{Ltt}, where the long-time behavior of homogeneous type-III Ricci flow solutions is studied.  In order to prevent collapsing and obtain an equally dimensional limit, the question is whether we can find a hermitian manifold $(M,J_\pm,\omega_\pm)$, bi-holomorphic diffeomorphisms $\phi(t):M\longrightarrow G$ and a scaling function $a(t)>0$ so that $a(t)\phi(t)^*\omega(t)$ converges smoothly to $\omega_\pm$, as $t\to T_\pm$.  Sometimes it is only possible to obtain this along a subsequence $t_k\to T_\pm$ and the diffeomorphisms $\phi(t_k)$ may be only defined on open subsets $\Omega_k$ exhausting $M$ and so $M$ might be non-diffeomorphic and even non-homeomorphic to $G$. This is called {\it pointed} or {\it Cheeger-Gromov} convergence of $(G,a(t)\omega(t))$ toward $(G_\pm,\omega_\pm)$.

It is proved in \cite{SCF} that given any CRF-solution $(G,\omega(t))$, there is always a pointed limit $(G_\pm,J_\pm,\omega_\pm)$ as above, where $G_\pm$ is a Lie group (possibly non-isomorphic to $G$) and the hermitian structure $(J_\pm,\omega_\pm)$ is left-invariant.  Moreover, $(J_\pm,\omega_\pm)$ is a {\it CR-soliton}, i.e.
$$
p(J_\pm,\omega_\pm)=c\omega_\pm+\lca_{X}\omega_\pm,
$$
for some $c\in\RR$ and a complete holomorphic vector field $X$ on $G_\pm$, or equivalently, the CRF-flow solution $\widetilde{\omega}(t)$ starting at $\omega_\pm$ is self-similar, in the sense that
$$
\widetilde{\omega}(t)=(-2ct+1)\vp(t)^*\omega_\pm,
$$
for some bi-holomorphic diffeomorphisms $\vp(t)$ of $(G_\pm,J_\pm)$.  Actually, $\vp(t)$ can be chosen to be a one-parameter group of automorphisms of $G_\pm$.  In many cases, the rescaling considered to obtain a pointed limit is the usual one given by $\omega(t)/t$.

After some preliminaries, we prove in Section \ref{chern} that any hermitian nilmanifold (i.e. $G$ nilpotent) is Chern-Ricci flat and so a fixed point for CRF.  In Sections \ref{CRF-sec} and \ref{CRsol-sec}, we overview the bracket flow approach and a structural result on CR-solitons from \cite{SCF} and then give a construction procedure for CR-solitons, including a characterization of those which are K\"ahler-Ricci solitons.

We study in Section \ref{CR-conv-sec} until what extent is the Chern-Ricci form and the Lie group structure of the pointed limit $(G_\pm,\omega_\pm)$ determined by the starting hermitian metric $(G,\omega_0)$.  For instance, we proved the following:

\begin{itemize}
\item If $P_0\leq 0$ (i.e. $T_+=\infty$) and $\kg:=\Ker P_0$ is an abelian ideal of $\ggo$, then $\omega(t)/t$ converges in the pointed sense, as $t\to\infty$, to a Chern-Ricci soliton $(G_+,\omega_+)$ with Lie algebra $\ggo_+=\kg^\perp\ltimes\kg$ such that $[\kg,\kg]_+=0$ and with Chern-Ricci operator given by $P_+|_{\kg^\perp}=-I$, $P_+|_{\kg}=0$.

\item If the eigenspace $\ggo_m$ of the maximum positive eigenvalue of $P_0$ is a nonzero Lie subalgebra of $\ggo$, then $T_+<\infty$ and $\omega(t)/(T_+-t)$ converges in the pointed sense, as $t\to T_+$, to a Chern-Ricci soliton $(G_+,\omega_+)$ with Lie algebra $\ggo_+=\ggo_m\ltimes\ggo_m^\perp$ such that $[\ggo_m^\perp,\ggo_m^\perp]_+=0$ and with Chern-Ricci operator given by $P_+|_{\ggo_m}=\unm I$, $P_+|_{\ggo_m^\perp}=0$.
\end{itemize}

In Section \ref{muA-sec}, we apply the above mentioned results on convergence and CR-solitons to the class of solvable Lie groups having a codimension one normal abelian subgroup.

Finally, we deal with complex surfaces in Section \ref{4dim}.  The family of $4$-dimensional solvable Lie groups admitting a left-invariant complex structure is quite large.  It consists of $19$ groups, although six of them are actually continuous pairwise non-isomorphic families (see Table \ref{tablalgsol}).  Moreover, many of them admit more than one complex structure up to equivalence and one of them does admit a two-parameter continuous family of complex structures (see Table \ref{tabcplxstruc}).  This classification was obtained in \cite{Ovn}.  We have found a CR-soliton hermitian metric for each of these complex structures, with the exceptions of only seven structures.  Most of them are either expanding or steady (i.e. $c\leq 0$), but one of the groups does admit an unexpected shrinking (i.e. $c>0$) CR-soliton (see Example \ref{exgama}).  We were able to prove the non-existence of a CR-soliton in only one of the seven cases, in which we furthermore found the non-isomorphic CR-soliton $(G_+,\omega_+)$ where all CRF-solutions are converging to (see Example \ref{r41}).  The CR-soliton metrics and their respective Chern-Ricci operators are given in Table \ref{tabsoliton}.

\vs \noindent {\it Acknowledgements.} We are very grateful to Isabel Dotti for pointing us to the reference \cite{BrbDttVrb} and for helpful conversations.

\section{Chern-Ricci form}\label{chern}

Let $(M,J,\omega,g)$ be a $2n$-dimensional hermitian manifold, where $\omega=g(J\cdot,\cdot)$.  The {\it Chern connection} is the unique connection $\nabla$ on $M$ which is hermitian (i.e. $\nabla J=0$, $\nabla g=0$) and its torsion satisfies $T^{1,1}=0$.  In terms of the Levi Civita connection $D$ of $g$, the Chern connection is given by
$$
g(\nabla_XY,Z)=g(D_XY,Z)-\unm d\omega(JX,Y,Z).
$$
We refer to e.g. \cite[(2.1)]{Vzz2}, \cite[(2.1)]{DsCVzz} and \cite[Section 2]{TstWnk} for different equivalent descriptions.  Note that $\nabla=D$ if and only if $(M,J,\omega,g)$ is K\"ahler.

The {\it Chern-Ricci form} $p=p(J,\omega,g)$ is defined by
$$
p(X,Y)=\sum_{i=1}^{n} g(R(X,Y)e_i,Je_i),
$$
where $R(X,Y)=\nabla_{[X,Y]} - [\nabla_X,\nabla_Y]$ is the curvature tensor of $\nabla$ and $\{ e_i,Je_i\}_{i=1}^n$ is a local orthonormal frame for $g$.  It follows that $p$ is closed, of type $(1,1)$ (i.e. $p=p(J\cdot,J\cdot)$), locally exact and in the K\"ahler case coincides with the Ricci form $\ricci(J\cdot,\cdot)$.

Consider now a left-invariant (almost-) hermitian structure $(J,\omega,g)$ on a Lie group with Lie algebra $\ggo$.  The integrability condition can be written as \begin{equation}\label{intJ}
[JX,JY]=[X,Y]+J[JX,Y]+J[X,JY],\qquad\forall X,Y\in\ggo.
\end{equation}
It is proved in \cite[Proposition 4.1]{Vzz2} (see also \cite{Pk}) that the Chern-Ricci form of $(J,\omega,g)$ is given by
\begin{equation}\label{CRform}
p(X,Y)=-\unm\tr{J\ad{[X,Y]}} + \unm\tr{\ad{J[X,Y]}}, \qquad\forall X,Y\in\ggo.
\end{equation}
We note that, remarkably, $p$ only depends on $J$.  The {\it Chern-Ricci operator} $P\in\End(\ggo)$, defined by
\begin{equation}\label{CRop}
p=\omega(P\cdot,\cdot),
\end{equation}
is a symmetric and hermitian map with respect to $(J,g)$ which vanishes on the center of $\ggo$.

It follows from \cite[Proposition 4.2]{Vzz2} that $p$ vanishes if $J$ is {\it bi-invariant} (i.e. $[J\cdot,\cdot]=J[\cdot,\cdot]$) or $J$ is {\it abelian} (i.e. $[J\cdot,J\cdot]=[\cdot,\cdot]$) and $\ggo$ unimodular.  On the other hand, it follows from \cite[Lemma 2.2]{BrbDttVrb} that hermitian nilmanifolds are all Chern-Ricci flat.  We now give a proof of this fact for completeness, which is based on the proof of such lemma and it is a bit shorter.

\begin{proposition}\label{Pnilp}
The Chern-Ricci form vanishes for any left-invariant hermitian structure on a nilpotent Lie group.
\end{proposition}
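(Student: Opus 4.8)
The plan is to use the explicit formula \eqref{CRform} for the Chern-Ricci form together with the fact that $\ggo$ is nilpotent. I would show that each of the two terms in \eqref{CRform} vanishes separately. The key observation is that for a nilpotent Lie algebra, $\ad{Z}$ is nilpotent for every $Z\in\ggo$, hence $\tr\ad{Z}=0$; this immediately kills the second term $\unm\tr{\ad{J[X,Y]}}$ in \eqref{CRform}, since $J[X,Y]\in\ggo$. So everything reduces to showing $\tr{J\ad{[X,Y]}}=0$ for all $X,Y\in\ggo$, i.e. $\tr{J\ad{W}}=0$ for every $W$ in the derived algebra $[\ggo,\ggo]$.

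The main point is therefore to handle the term $\tr{J\ad{W}}$ for $W\in[\ggo,\ggo]$. Here I would use the integrability condition \eqref{intJ}. First reduce to $W=[X,Y]$. Applying $J$ to \eqref{intJ} and using $J^2=-I$ gives an expression for $[X,Y]$, and more usefully one can rewrite \eqref{intJ} as $[X,Y] = [JX,JY] - J[JX,Y] - J[X,JY]$. Substituting and using linearity of $\tr{J\ad{\cdot}}$, the computation of $\tr{J\ad{[X,Y]}}$ gets expressed in terms of traces of operators built from brackets, and one exploits that these operators are nilpotent or that their $J$-twisted traces cancel in pairs. Concretely, one can instead argue as follows: pick a flag $\ggo=\ggo_1\supset\ggo_2\supset\dots$ with $\ggo_{k+1}=[\ggo,\ggo_k]$ (the lower central series), so $\ad{W}$ maps $\ggo_k$ into $\ggo_{k+1}$ for $W\in\ggo$; for $W\in[\ggo,\ggo]=\ggo_2$ one gets the sharper statement that $\ad{W}$ shifts the flag by two steps. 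Then, choosing a basis adapted to the flag and writing $J$ in block form, one checks that $J\ad{W}$ has zero diagonal blocks, whence $\tr{J\ad{W}}=0$. This is exactly where integrability is needed: without it there is no reason for $J$ to interact well with the lower central series, but \eqref{intJ} forces the needed compatibility (in particular $J$ preserves, up to the lower-order terms controlled by integrability, a suitable flag).

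The step I expect to be the main obstacle is precisely establishing $\tr{J\ad{W}}=0$ for $W\in[\ggo,\ggo]$: the nilpotency of $\ad{W}$ alone does not give vanishing of $\tr{J\ad{W}}$ (composing a nilpotent operator with $J$ need not be traceless), so one must genuinely combine the flag structure of the lower central series with the integrability identity \eqref{intJ}. I would organize this by induction on the nilpotency step, or alternatively by the adapted-basis block computation sketched above; the referenced proof of \cite[Lemma 2.2]{BrbDttVrb} presumably does something similar, and the claim in the excerpt is that a slightly shorter route via \eqref{CRform} is available. Everything else — the vanishing of the second term, the reduction to $W\in[\ggo,\ggo]$, and the final assembly via \eqref{CRop} to conclude $P=0$ — is routine.
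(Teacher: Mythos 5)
Your reduction is fine: the second term $\unm\tr\ad_{J[X,Y]}$ in \eqref{CRform} dies because $\ad_Z$ is nilpotent for every $Z$ in a nilpotent $\ggo$, so everything comes down to $\tr(J\ad_W)=0$ for $W\in[\ggo,\ggo]$ (the paper in fact proves this for every $W\in\ggo$). The gap is exactly in the step you single out as the main one. With a basis adapted to the lower central series $\ggo=\ggo_1\supset\ggo_2\supset\cdots$ (your notation), the fact that $\ad_W$ shifts the flag by two steps for $W\in\ggo_2$ only forces the diagonal blocks of $J\ad_W$ to vanish if every component of $J$ that lowers the filtration degree by two or more steps vanishes, i.e.\ roughly if $J(\ggo_i)\subset\ggo_{i-1}$ for all $i$. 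This compatibility of $J$ with the lower central series is asserted ("integrability forces the needed compatibility") but never proved, and it is not a consequence of \eqref{intJ}: an integrable left-invariant complex structure on a nilpotent Lie group need not interact with the descending (or ascending) central series in any such way --- this is precisely the phenomenon of complex structures that are not nilpotent in the sense of Cordero--Fern\'andez--Gray--Ugarte, which occur already in dimension six. Your alternative sketch (substituting $[X,Y]=[JX,JY]-J[JX,Y]-J[X,JY]$ and hoping the $J$-twisted traces "cancel in pairs") is likewise not carried out, so the key identity $\tr(J\ad_W)=0$ remains unproved.

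The flag that does work lives in the complexification, and this is how the paper argues: integrability says exactly that $\ggo^{1,0}\subset\ggo_{\CC}=\ggo^{1,0}\oplus\ggo^{0,1}$ is a complex nilpotent subalgebra, so relative to a basis $\{X_1,\dots,X_n,\overline{X}_1,\dots,\overline{X}_n\}$ adapted to this splitting each $\ad_{X_k}$ is block upper triangular with diagonal blocks $A_k$, $B_k$, while $J^c$ is genuinely diagonal with eigenvalues $\pm i$. Nilpotency is then used twice: $\tr A_k=0$ because $A_k$ is the adjoint action of $X_k$ on the nilpotent algebra $\ggo^{1,0}$, and $\tr\ad_{X_k}=0$, hence $\tr B_k=0$; therefore $\tr(J^c\ad_{X_k})=i\tr A_k-i\tr B_k=0$, and similarly for $\overline{X}_k$, giving $\tr(J\ad_X)=0$ for all $X\in\ggo$. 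If you insist on a real-flag argument you would first have to construct a $J$-compatible filtration (\`a la Salamon), which is substantially more work than this complexified block computation.
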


\begin{proof}
It is sufficient to prove that $\tr(J\ad_{X})=0$ for any $X\in\ggo$ (see \eqref{CRform}), or
equivalently, $\tr(J^c\ad_{X})=0$, for any $X\in\ggo_{\CC}$, where $\ggo_{\CC}=\ggo\oplus i\ggo$ is the complexification of $\ggo$ and
$J^c:\ggo_{\CC}\to\ggo_{\CC}$ is given by $J^c(X+iY)=JX + iJY$.  Consider now the decomposition $\ggo_{\CC}=\ggo^{1,0}\oplus\ggo^{0,1}$ in $\pm i$-eigenspaces of $J^c$.  Since $J$ is integrable
and $\ggo$ is nilpotent, we have that $\ggo^{1,0}$ is a (complex) nilpotent Lie subalgebra of
$\ggo_{\CC}$. It follows that if $\{X_1,\ldots,X_n\}$ is a basis of $\ggo^{1,0}$, then $\beta=\{X_1,\ldots,X_n,\overline{X}_1,\ldots,\overline{X}_n\}$
is a basis of $\ggo_{\CC}$ and the matrix of $\ad_{X_k}$ relative to $\beta$ has the form
$$
\left[\begin{matrix} A_k& \ast\\ 0& B_k
\end{matrix}\right].
$$
Since $\tr{A_k}=0$ and $\tr{\ad_{X_k}}=0$ by nilpotency, we obtain that $\tr{B_k}=0$.  On the other hand, as
the matrix of $J^c$ relative to $\beta$ is given by
$$
\left[\begin{matrix} iId& 0\\ 0& -iId
\end{matrix}\right],
$$
it follows that the matrix of $J^c\ad_{X_k}$ is of the form
$$
\left[\begin{matrix} iA_k& \ast\\ 0& -iB_k
\end{matrix}\right],
$$
and so it has zero trace. A similar argument gives that
$\tr(J^c\ad_{\overline{X}_k})=0$, concluding the proof.
\end{proof}

\section{Chern-Ricci flow}\label{CRF-sec}

Let $(M,J)$ be a complex manifold.  The {\it Chern-Ricci flow} (CRF) is the evolution equation for a one-parameter family $\omega(t)$ of hermitian metrics defined by
\begin{equation}\label{CRF}
\dpar \omega=-2p, \qquad\mbox{or equivalently}, \qquad \dpar g=-2p(\cdot,J\cdot),
\end{equation}
where $p=p(J,\omega(t))$ is the Chern-Ricci form and $g=\omega(\cdot,J\cdot)$.  We refer to \cite{Gll,TstWnk,TstWnk2} and the references therein for further information on this flow.  If the starting metric $\omega_0$ is K\"ahler, then CRF becomes the K\"ahler-Ricci flow (KRF).

Let $(G,J)$ be a Lie group endowed with a left-invariant complex structure.  Given a left-invariant hermitian metric $\omega_0$, it follows from the  diffeomorphism invariance of equation \eqref{CRF} that the CRF-solution starting at $\omega_0$ stays left-invariant and so it can be studied on the Lie algebra.  Indeed, the CRF becomes the ODE system
\begin{equation}\label{eqLG}
\ddt\omega=-2p,
\end{equation}
where $\omega(t),p(t)\in\Lambda^2\ggo^*$, as all the tensors involved are determined by their value at the identity of the group.  Thus short-time existence (forward and backward) and uniqueness of the solutions are always guaranteed.

Since on Lie groups the Chern-Ricci form $p$ depends only on $J$ (see \eqref{CRform}), we obtain that along the CRF-solution starting at $\omega_0$, $p(t)\equiv p_0:=p(J,\omega_0)$, and so $\omega(t)$ is simply given by
\begin{equation}\label{CRFsol}
\omega(t)=\omega_0-2tp_0, \qquad\mbox{or equivalently}, \qquad g(t)=g_0-2tp_0(\cdot,J\cdot).
\end{equation}
If $P_0$ is the Chern-Ricci operator of $\omega_0$ (see \eqref{CRop}), then
$$
\omega(t)=\omega_0((I-2tP_0)\cdot,\cdot),
$$
and so the solution exists as long as the hermitian map $I-2tP_0$ is positive.  It follows that the maximal interval of time existence $(T_-,T_+)$ of $\omega(t)$ is given by
\begin{equation}\label{CRFint}
T_+=\left\{\begin{array}{lcl} \infty, & \quad\mbox{if}\;P_0\leq 0, \\ \\ 1/(2p_+), & \quad\mbox{otherwise,}\end{array}\right. \qquad
T_-=\left\{\begin{array}{lcl} -\infty, & \quad\mbox{if}\;P_0\geq 0, \\ \\ 1/(2p_-), & \quad\mbox{otherwise,}\end{array}\right.
\end{equation}
where $p_+$ is the maximum positive eigenvalue of the Chern-Ricci operator $P_0$ of $\omega_0$ (see \eqref{CRop}) and $p_-$ is the minimum negative eigenvalue.

\subsection*{Bracket flow}
Given a left-invariant hermitian metric $\omega_0$ on a simply connected Lie group $(G,J)$ endowed with a left-invariant complex structure, one has that the new metric
$$
\omega=h^*\omega_0:=\omega_0(h\cdot,h\cdot),
$$
is also hermitian for any $h\in\Gl(\ggo,J)\simeq\Gl_n(\CC)$.  Moreover, the corresponding holomorphic Lie group isomorphism
$$
\widetilde{h}:(G,J,\omega)\longrightarrow (G_\mu,J,\omega_0), \qquad\mbox{where}\qquad \mu=h\cdot\lb:=h[h^{-1}\cdot,h^{-1}\cdot],
$$
is an equivalence of hermitian manifolds.  Here $\lb$ denotes the Lie bracket of the Lie algebra $\ggo$ and so $\mu$ defines a new Lie algebra (isomorphic to $(\ggo,\lb)$) with same underlying vector space $\ggo$.  We denote by $G_\mu$ the simply connected Lie group with
Lie algebra $(\ggo,\mu)$.  This equivalence suggests the following natural question:

\begin{quote}
What if we evolved $\mu$ rather than $\omega$?
\end{quote}

We consider for a family $\mu(t)\in \Lambda^2\ggo^*\otimes\ggo$ of Lie brackets the following evolution equation:
\begin{equation}\label{BF}
\ddt\mu=\delta_\mu(P_\mu), \qquad\mu(0)=\lb,
\end{equation}
where $P_\mu\in\End(\ggo)$ is the Chern-Ricci operator of the hermitian manifold $(G_\mu,J,\omega_0)$ and $\delta_\mu:\End(\ggo)\longrightarrow\Lambda^2\ggo^*\otimes\ggo$ is defined by
$$
\delta_\mu(A):=\mu(A\cdot,\cdot)+\mu(\cdot,A\cdot)-A\mu(\cdot,\cdot) = -\ddt|_{t=0} e^{tA}\cdot\mu, \qquad\forall A\in\End(\ggo).
$$
This evolution equation is called the {\it bracket flow} and has been proved in \cite{SCF} to be equivalent to the CRF.  Note that since $J$ is fixed, the algebraic subset
$$
\left\{\mu\in \Lambda^2\ggo^*\otimes\ggo: \mu\;\mbox{satisfies the Jacobi identity and}\; J\;\mbox{is integrable on}\; G_\mu\right\},
$$
is invariant under the bracket flow; indeed, $\mu(t)\in\Gl_n(\CC)\cdot\lb$ for all $t$.

For a given simply connected hermitian Lie group $(G,J,\omega_0)$ with Lie algebra $\ggo$, we may therefore consider the following two one-parameter families of hermitian Lie groups:
\begin{equation}\label{3rm}
(G,J,\omega(t)), \qquad (G_{\mu(t)},J,\omega_0),
\end{equation}
where $\omega(t)$ is the CRF (\ref{eqLG}) starting at $\omega_0$ and $\mu(t)$ is the bracket flow (\ref{BF}) starting at the Lie bracket $\lb$ of $\ggo$.

\begin{theorem}\label{eqfl}\cite[Theorem 5.1]{SCF}
There exist time-dependent holomorphic Lie group isomorphisms $h(t):G\longrightarrow G_{\mu(t)}$ such that
$$
\omega(t)=h(t)^*\omega_0, \qquad\forall t,
$$
which can be chosen such that their derivatives at the identity, also denoted by $h=h(t)$ (in particular, $\mu(t)=h(t)\cdot\lb$), is the solution to any of the following systems of ODE's:
\begin{itemize}
\item[(i)] $\ddt h=-hP$, $\quad h(0)=I$.
\item[(ii)] $\ddt h=-P_\mu h$, $\quad h(0)=I$.
\end{itemize}
\end{theorem}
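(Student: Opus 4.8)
The plan is not to integrate the two ODE systems abstractly, but to write the curve $h(t)$ down in closed form and check everything by hand, taking advantage of the explicit formula for the CRF-solution. Write $P_0$ for the Chern-Ricci operator of $\omega_0$ and recall from \eqref{CRFsol} that $\omega(t)=\omega_0((I-2tP_0)\cdot,\cdot)$, where $I-2tP_0$ is positive definite precisely on the maximal existence interval $(T_-,T_+)$ (see \eqref{CRFint}). Since $P_0$ is symmetric and hermitian with respect to $(J,g_0)$, the operator $I-2tP_0$ is $g_0$-symmetric, positive definite and commutes with $J$, so I would \emph{define} $h(t):=(I-2tP_0)^{1/2}$, its unique positive $g_0$-symmetric square root. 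This is a smooth curve in $\Gl(\ggo)$ on $(T_-,T_+)$ which is $g_0$-symmetric and commutes with both $J$ and $P_0$; in particular $h(t)\in\Gl(\ggo,J)$. Setting $\mu(t):=h(t)\cdot\lb$, the discussion preceding the statement shows that the Lie algebra isomorphism $h(t)\colon(\ggo,\lb)\to(\ggo,\mu(t))$ integrates to a Lie group isomorphism $G\to G_{\mu(t)}$ (both groups being simply connected) which is holomorphic, because $h(t)$ commutes with $J$, and which is an equivalence of hermitian manifolds $(G,J,h(t)^*\omega_0)\to(G_{\mu(t)},J,\omega_0)$.

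Next I would verify the three assertions. That $\omega(t)=h(t)^*\omega_0$ is immediate: for $X,Y\in\ggo$, using $g_0$-symmetry of $h(t)$ and $[h(t),J]=0$,
\[
(h(t)^*\omega_0)(X,Y)=\omega_0(h(t)X,h(t)Y)=\omega_0(h(t)^2X,Y)=\omega_0((I-2tP_0)X,Y)=\omega(t)(X,Y).
\]
For system (i): since by \eqref{CRform} the Chern-Ricci form of a left-invariant hermitian metric depends only on $J$, we have $p(J,\omega(t))\equiv p_0=\omega_0(P_0\cdot,\cdot)$, and comparing with $\omega(t)((I-2tP_0)^{-1}P_0\cdot,\cdot)=\omega_0(P_0\cdot,\cdot)$ shows that the Chern-Ricci operator of $\omega(t)$ is $P(t)=(I-2tP_0)^{-1}P_0$; differentiating the square root (all operators involved commute),
\[
\ddt h(t)=-P_0(I-2tP_0)^{-1/2}=-(I-2tP_0)^{1/2}(I-2tP_0)^{-1}P_0=-h(t)P(t),\qquad h(0)=I.
\]
For system (ii): the Chern connection, hence also the Chern-Ricci form and operator, is natural under holomorphic isometries, so the equivalence of hermitian manifolds above forces the Chern-Ricci operator $P_\mu$ of $(G_{\mu(t)},J,\omega_0)$ to equal $h(t)P(t)h(t)^{-1}$, whence $\ddt h(t)=-h(t)P(t)=-P_\mu\,h(t)$, again with $h(0)=I$; in particular the same curve solves both systems. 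Finally, from $\ddt h\,h^{-1}=-P_\mu$ and $\dds|_{s=0}\,e^{sA}\cdot\nu=-\delta_\nu(A)$ one obtains $\ddt\mu=\delta_\mu(P_\mu)$ with $\mu(0)=\lb$, so $\mu(t)=h(t)\cdot\lb$ is the bracket-flow solution starting at $\lb$.

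There is no genuine analytic difficulty here: the CRF-solution is explicit and $h(t)$ is an honest smooth curve on the whole maximal interval $(T_-,T_+)$, so the proof reduces to careful bookkeeping — distinguishing a left-invariant tensor from its value at the identity, checking that $h(t)$ really integrates to a (necessarily holomorphic) Lie group isomorphism $G\to G_{\mu(t)}$, and invoking the naturality of the Chern-Ricci operator under holomorphic isometries. The step I would be most careful about is this last one, since it is precisely what ties the CRF on $G$ to the bracket flow on the groups $G_{\mu(t)}$; everything else is already contained in the construction preceding the statement and in \cite[Section 5]{SCF}. An alternative, slightly more robust route is to drop the closed form and define $h(t)$ as the solution of the linear ODE (i) directly; then $h(t)^*\omega_0=\omega(t)$ follows because both curves solve the linear equation $\ddt\xi=-\xi(P(t)\cdot,\cdot)-\xi(\cdot,P(t)\cdot)$ with $\xi(0)=\omega_0$ (for $\xi=\omega(t)$ this is just $\ddt\omega=-2p_0$, since $P(t)$ is $g(t)$-symmetric and commutes with $J$), and such an equation has a unique solution.
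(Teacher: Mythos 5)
Your argument is correct, but it is worth pointing out that this paper does not actually prove Theorem \ref{eqfl}: it is quoted from \cite[Theorem 5.1]{SCF}, where it is established for a general curvature flow on almost-hermitian Lie groups by \emph{defining} $h(t)$ as the solution of the linear ODE (i) (or (ii)), showing that $\mu(t)=h(t)\cdot\lb$ then solves the bracket flow and that $h(t)^*\omega_0$ solves the same evolution equation as $\omega(t)$, and concluding by uniqueness of ODE solutions; the closed form $h(t)=(I-2tP_0)^{1/2}$ appears in the present paper only \emph{afterwards}, as a consequence of the theorem. Your route inverts this logic: you exploit the CRF-specific fact that $p(t)\equiv p_0$ (so $\omega(t)=\omega_0((I-2tP_0)\cdot,\cdot)$ is explicit), take $h(t):=(I-2tP_0)^{1/2}$ as the definition, and verify $\omega(t)=h(t)^*\omega_0$, $P(t)=(I-2tP_0)^{-1}P_0$, both ODEs, and the bracket flow equation by direct computation, using that $P_0$ is $g_0$-symmetric, commutes with $J$, and that the Chern-Ricci form is natural under the holomorphic isometries $\widetilde{h}(t)$ (the step giving $P_{\mu}=hP h^{-1}$, which you correctly single out as the one tying the two flows together). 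This is a legitimate, self-contained proof in the present setting, and it buys concreteness and elementary verifiability; what it loses is generality, since it breaks down for flows whose curvature term genuinely depends on the evolving metric, which is exactly the situation \cite{SCF} handles with the ODE-plus-uniqueness argument. The ``alternative, more robust route'' you sketch at the end is essentially that general proof, so you have in effect reproduced both arguments; the only mild caveat in your main computation is that all the commutativity and symmetry claims for $h(t)$ rest on $P_0$ being symmetric and hermitian, which the paper records right after \eqref{CRop} and which you do invoke, so there is no gap.
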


The maximal interval of time existence $(T_-,T_+)$ is therefore the same for both flows, as it is the behavior
of any kind of curvature along the flows.

It is easy to see that the Chern-Ricci operator of $(G,J,\omega(t))$ equals
$$
P(t)=(I-2tP_0)^{-1}P_0,
$$
from which it follows that the family $h(t)\in\Gl(\ggo)$ is given by $h(t)=(I-2tP_0)^{1/2}$.  The solution to the bracket flow is therefore given by
$$
\mu(t)=(I-2tP_0)^{1/2}\cdot\lb,
$$
and hence relative to any orthonormal basis $\{ e_1,\dots,e_{2n}\}$ of eigenvectors of $P_0$, say with eigenvalues $\{ p_1,\dots,p_{2n}\}$, the structure coefficients of $\mu(t)$ are
\begin{equation}\label{muijk}
\mu_{ij}^k(t)=\left(\frac{1-2tp_k}{(1-2tp_i)(1-2tp_j)}\right)^{1/2} c_{ij}^k,
\end{equation}
where $c_{ij}^k$ are the structure coefficients of the Lie bracket $\lb$ of $\ggo$ (i.e. $[e_i,e_j]=\sum c_{ij}^ke_k$).

The Chern scalar curvature is therefore given by
$$
\tr{P(t)}=\sum_{i=1}^{2n} \frac{p_i}{1-2tp_i}.
$$
Thus $\tr{P(t)}$ is strictly increasing unless $P(t)\equiv 0$ (i.e. $\omega(t)\equiv \omega_0$) and the integral of $\tr{P(t)}$ must blow up at a finite-time singularity $T_+<\infty$.  However, $\tr{P(t)}\leq \frac{C}{T_+-t}$ for some constant $C>0$, which is the claim of a well-known general conjecture for the K\"ahler-Ricci flow (see e.g. \cite[Conjecture 7.7]{SngWnk}).

\section{Chern-Ricci solitons}\label{CRsol-sec}

In this section, we deal with self-similar CRF-solutions on Lie groups.  It follows from Proposition \ref{Pnilp} that $p=0$ if $\ggo$ is nilpotent, and thus any left-invariant hermitian structure on a nilpotent Lie group (and consequently, on any compact nilmanifold) is a fixed point for the CRF.  However, we will show in Section \ref{4dim} that several $4$-dimensional solvable Lie groups do admit Chern-Ricci solitons which are not fixed points (i.e. $p\ne 0$), including the covers of Inoue surfaces.

\begin{definition}\label{CRS}\cite[(39)]{SCF}
$(G,J,\omega)$ is said to be a {\it Chern-Ricci soliton} (CR-soliton) if its Chern-Ricci operator satisfies
$$
P=cI+\unm(D+D^t), \qquad\mbox{for some}\quad c\in\RR, \quad D\in\Der(\ggo), \quad DJ=JD.
$$
\end{definition}

This is equivalent to have
$$
p(J,\omega)=c\omega+\unm(\omega(D\cdot,\cdot)+\omega(\cdot,D\cdot)) =c\omega-\unm\lca_{X_D}\omega,
$$
where $X_D$ is the vector field on the Lie group defined by the one-parameter subgroup of automorphisms $\vp_t$ with derivative $e^{tD}\in\Aut(\ggo)$ and $\lca_{X_D}$ denotes Lie derivative.  The CRF-solution starting at a CR-soliton $(G,J,\omega)$ is given by
\begin{equation}\label{evsol}
\omega(t)=(-2ct+1) \left(e^{s(t)D}\right)^*\omega,
\end{equation}
where $s(t):=\frac{\log(-2ct+1)}{-2c}$ if $c\ne 0$ and $s(t)=t$ when $c=0$.

The following structural result for Chern-Ricci solitons, which in particular holds for K\"ahler-Ricci solitons, provides a starting point for approaching the classification problem.

\begin{proposition}\label{CR-sol2}\cite[Proposition 8.2]{SCF}
Let $(G,J,\omega)$ be a hermitian Lie group with Lie algebra $\ggo$ and Chern-Ricci operator $P\ne 0$.  Then the following conditions are equivalent.

\begin{itemize}
\item[(i)] $\omega$ is a Chern-Ricci soliton with constant $c$.
\item[(ii)] $P=cI+D$, for some $D\in\Der(\ggo)$.
\item[(iii)] The eigenvalues of $P$ are all either equal to $0$ or $c$, the kernel $\kg=\Ker{P}$ is an abelian ideal of $\ggo$ and its orthogonal complement $\kg^\perp$ (i.e. the $c$-eigenspace of $P$) is a Lie subalgebra of $\ggo$ (in particular, $\ggo$ is the semidirect product $\ggo=\kg^\perp\ltimes\kg$ and $c$ is always nonzero).
\end{itemize}
\end{proposition}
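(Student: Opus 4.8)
The plan is to prove the three conditions equivalent through the cycle (iii)$\Rightarrow$(ii)$\Rightarrow$(i)$\Rightarrow$(iii); the first two implications are elementary and essentially all the content lies in the third.

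For (iii)$\Rightarrow$(ii), put $D:=P-cI$. Since by hypothesis the eigenvalues of $P$ are only $0$ and $c$, with $0$-eigenspace $\kg=\Ker P$ and $c$-eigenspace $\kg^\perp$, we have $D|_\kg=-cI$ and $D|_{\kg^\perp}=0$. One then verifies the Leibniz rule $D[X,Y]=[DX,Y]+[X,DY]$ on the three types of pairs using the hypotheses on $\kg$ and $\kg^\perp$: for $X,Y\in\kg^\perp$ both sides vanish because $[X,Y]\in\kg^\perp$; for $X\in\kg^\perp$, $Y\in\kg$ both sides equal $-c[X,Y]$ because $[X,Y]\in\kg$; and for $X,Y\in\kg$ both sides vanish because $[X,Y]=0$. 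Hence $D\in\Der(\ggo)$ and $P=cI+D$, which is (ii). (Moreover $c\neq0$, since $c=0$ would force $P=D$ to vanish on $\kg^\perp$ as well, i.e.\ $P=0$, against the hypothesis; and since $\kg$ is an ideal with complementary subalgebra $\kg^\perp$ we get $\ggo=\kg^\perp\ltimes\kg$.) For (ii)$\Rightarrow$(i): the Chern-Ricci operator $P$ and $cI$ are symmetric and commute with $J$, hence so does $D=P-cI$, and therefore $P=cI+\unm(D+D^t)$ with $D\in\Der(\ggo)$ and $DJ=JD$, which is exactly Definition~\ref{CRS}.

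The substantive implication is (i)$\Rightarrow$(iii), which I would reach by first establishing (ii). Recall from the bracket-flow set-up that $D\in\Der(\ggo)$ is equivalent to $\delta_{\lb}(D)=0$, that $\delta_{\lb}(I)=\lb$, and hence, by linearity of $\delta_{\lb}$, that (ii) is equivalent to $\delta_{\lb}(P)\in\RR\lb$. Writing (i) as $P=cI+\unm(D+D^t)$ with $D\in\Der(\ggo)$ gives $\delta_{\lb}(P)=c\,\lb+\unm\,\delta_{\lb}(D^t)$, so proving (ii) amounts to proving $\delta_{\lb}(D^t)\in\RR\lb$. This I would obtain from the moment-map picture underlying the bracket flow \eqref{BF}: $\Gl(\ggo,J)\simeq\Gl_n(\CC)$ is a real reductive algebraic group, stable under the transpose determined by $g$, acting on the algebraic variety of Lie brackets, and---up to a term involving the trace of the adjoint representation (which vanishes for $\ggo$ unimodular) and a normalization---the Chern-Ricci operator is the associated moment map evaluated along the $\Gl_n(\CC)$-orbit of $\lb$. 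The soliton equation (i) then says precisely that $\lb$ is a critical point of the scale-invariant norm functional restricted to that orbit, and the Lagrange-multiplier condition for such a critical point is exactly $\delta_{\lb}(P)\in\RR\lb$. This is the step in which the specific nature of $P$ is used---for an arbitrary derivation $D$ the transpose $D^t$ need not be a derivation---and I expect it to be the main obstacle. Granting (ii), $D:=P-cI$ is now a symmetric derivation, so $\ggo$ splits into $D$-eigenspaces $\ggo_\alpha$ with $[\ggo_\alpha,\ggo_\beta]\subseteq\ggo_{\alpha+\beta}$; combining the critical-point relation $\delta_{\lb}(P)\in\RR\lb$ with the self-adjointness of $P$, the vanishing of $P$ on the center, and this grading forces every eigenvalue of $P$ to be $0$ or $c$. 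Finally, with $\kg=\Ker P$ (the $(-c)$-eigenspace of $D$) and $\kg^\perp$ (the $0$-eigenspace of $D$), reading the Leibniz rule on $\kg\times\kg$, $\kg^\perp\times\kg$ and $\kg^\perp\times\kg^\perp$ against the grading shows that $[\kg,\kg]$ lies in the necessarily zero $(-2c)$-eigenspace of $D$, that $[\ggo,\kg]\subseteq\kg$, and that $\kg^\perp$ is a subalgebra; that is, $\kg$ is an abelian ideal, $\kg^\perp$ a subalgebra, $\ggo=\kg^\perp\ltimes\kg$ and $c\neq0$, which is (iii).
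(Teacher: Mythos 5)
The paper itself gives no proof of this proposition (it is quoted from \cite[Proposition 8.2]{SCF}), so your argument must stand on its own, and it does so only for the elementary directions: your (iii)$\Rightarrow$(ii)$\Rightarrow$(i) is correct. The substantive implication (i)$\Rightarrow$(ii) is not established. Your mechanism is the identification of the Chern--Ricci operator with (a normalization of) the moment map of the $\Gl(\ggo,J)$-action on the variety of Lie brackets, up to a correction that vanishes in the unimodular case, so that the soliton equation becomes a critical-point condition with Lagrange multiplier $\delta_{\lb}(P)\in\RR\lb$. That identification is false. For any nonabelian nilpotent $\ggo$ (which is unimodular, so your correction term is absent), Proposition \ref{Pnilp} gives $P\equiv 0$ for every hermitian structure, whereas the moment map of $\Gl(\ggo,J)$ never vanishes at a nonzero bracket: since the scaling subgroup acts by $e^{tI}\cdot\mu=e^{-t}\mu$, the component of the moment map in the direction of $I$ is a nonzero multiple of $\|\mu\|^2$. (Moreover the proposition carries no unimodularity hypothesis, so even a corrected moment-map statement would not cover it.) Thus the Lagrange-multiplier route to $\delta_{\lb}(P)\in\RR\lb$ --- the step you yourself single out as the main obstacle --- is unavailable, and (i)$\Rightarrow$(ii) remains unproven. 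What the argument must exploit instead is the special form \eqref{CRform}: $p=\theta([\cdot,\cdot])$ for the fixed linear functional $\theta(Z)=-\unm\tr(J\ad_Z)+\unm\tr(\ad_{JZ})$, which depends only on $J$; this is not a moment-map structure.

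There is a second gap at the end: granting (ii), you claim the eigenvalue dichotomy follows from self-adjointness, the vanishing of $P$ on the center and the $D$-grading. These data do not suffice. On $\hg_3\oplus\RR$ with $[e_1,e_2]=e_3$, $Je_1=e_2$, $Je_3=e_4$, the operator $P=\Diag(-1,-1,0,0)$ is symmetric, commutes with $J$, vanishes on the center and equals $cI+D$ with $c=-2$ and the derivation $D=\Diag(1,1,2,2)$ commuting with $J$; yet its nonzero eigenvalue is $-1\neq c$. (This $P$ is of course not the Chern--Ricci operator of that hermitian structure, which is zero --- and that is exactly the point: your listed hypotheses do not remember that $P$ is given by \eqref{CRform}.) The dichotomy does follow once \eqref{CRform} is used: writing $\ggo=\oplus_a\ggo_a$ for the eigenspaces of the symmetric derivation $D=P-cI$, one has $\theta|_{\ggo_a}=0$ for every $a\neq0$ (for $W\in\ggo_a$ both $\ad_W$ and $J\ad_W$ shift the grading, hence are traceless), so for $X,Y\in\ggo_a$ with $a\neq0$ one gets $(c+a)\,\omega(X,Y)=p(X,Y)=\theta([X,Y])=0$; nondegeneracy of $\omega$ on the $J$-invariant subspace $\ggo_a$ forces $a=-c$, and the grading then gives $[\kg,\kg]\subseteq\ggo_{-2c}=0$, $[\ggo,\kg]\subseteq\kg$ and $[\kg^\perp,\kg^\perp]\subseteq\kg^\perp$, i.e.\ (iii), with $c\neq0$ since $P\neq0$. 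So the skeleton of your last step is right, but the decisive input is the trace formula for $p$ rather than the center condition, and (i)$\Rightarrow$(ii) still needs a genuine argument; in the cited source this is part of \cite[Proposition 8.2]{SCF}, whose proof does not go through a moment map.
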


The following corollary essentially follows from the observation that $J$ must leave $\kg^\perp$ and $\kg$ invariant, as it commutes with $P$.

\begin{corollary}
Any Chern-Ricci soliton can be constructed as $(\ggo=\ggo_1\ltimes\ggo_2,J,\omega)$, with $J=\left[\begin{smallmatrix} J_1&\\ &J_2\end{smallmatrix}\right]$, $\omega=\omega_1\oplus\omega_2$, from the following data:

\begin{itemize}
\item a hermitian Lie algebra $(\ggo_1,J_1,\omega_1)$;
\item a hermitian abelian Lie algebra $(\ggo_2,J_2,\omega_2)$;
\item and a representation $\theta:\ggo_1\longrightarrow\End(\ggo_2)$;
\end{itemize}

such that the following conditions hold:

\begin{itemize}
\item $[\theta(J_1X),J_2]=J_2[\theta(X),J_2]$, for all $X\in\ggo_1$;
\item the Chern-Ricci operator $P_1$ of $(\ggo_1,J_1,\omega_1)$ equals
$$
P_1=cI-P_\theta,
$$
where $P_\theta\in\End(\ggo_1)$ is defined by
$$
\omega_1(P_\theta X,Y)=-\unm\tr{J_2\theta([X,Y])}+\unm\tr{\theta(J_1[X,Y])}, \qquad\forall X,Y\in\ggo_1.
$$
\end{itemize}
The Chern-Ricci operator of $(\ggo,J,\omega)$ is given by $P|_{\ggo_1}=cI$, $P|_{\ggo_2}=0$.

Moreover, $(\ggo,J,\omega)$ is K\"ahler (and so a K\"ahler-Ricci soliton) if and only if $\omega_1$ is closed (i.e. $(\ggo_1,J_1,\omega_1)$ K\"ahler) and $\theta(\ggo_1)\subset\spg(\ggo_2,\omega_2)$ (i.e. $\theta(X)^t=J_2\theta(X)J_2$ for all $X\in\ggo_1$).
\end{corollary}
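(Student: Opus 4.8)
The plan is to deduce this corollary directly from Proposition \ref{CR-sol2}, using only the extra information that $J$ commutes with $P$. First I would start from a CR-soliton $(G,J,\omega)$ with $P\neq0$. By Proposition \ref{CR-sol2}(iii), $\ggo=\kg^\perp\ltimes\kg$ with $\kg=\Ker P$ an abelian ideal and $\kg^\perp$ the $c$-eigenspace of $P$, a subalgebra. Since $P$ is hermitian (i.e. commutes with $J$, by the remarks after \eqref{CRop}), both eigenspaces $\kg^\perp$ and $\kg$ are $J$-invariant; set $\ggo_1=\kg^\perp$, $\ggo_2=\kg$, $J_i=J|_{\ggo_i}$, and $\omega_i=\omega|_{\ggo_i}$ (the off-diagonal block of $\omega$ vanishes because $P$ is $\omega$-symmetric with distinct eigenvalues $c\neq0$ on the two blocks, so they are $\omega$-orthogonal — this is where positivity of $\omega$ and the identity $\omega(P\cdot,\cdot)=\omega(\cdot,P\cdot)$ enter). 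Then $\ggo_2$ is abelian, $(\ggo_1,J_1,\omega_1)$ is hermitian, and $\theta:\ggo_1\to\End(\ggo_2)$ is just $\theta(X)=\ad_X|_{\ggo_2}$, which is well-defined since $\kg$ is an ideal. The compatibility $[\theta(J_1X),J_2]=J_2[\theta(X),J_2]$ is nothing but the integrability condition \eqref{intJ} for $J$ restricted to the pair (one vector in $\ggo_1$, one in $\ggo_2$): expanding \eqref{intJ} with $X\in\ggo_1$, $Y\in\ggo_2$ and using that all resulting brackets of an element of $\ggo_1$ with an element of $\ggo_2$ lie in the ideal $\ggo_2$, one reads off exactly that identity.

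Conversely, given the data, I would verify that $(\ggo,J,\omega)$ as constructed is a CR-soliton. Jacobi for the semidirect product $\ggo_1\ltimes_\theta\ggo_2$ is automatic since $\ggo_1$ is a Lie algebra, $\ggo_2$ is abelian, and $\theta$ is a representation; integrability of $J$ on $\ggo$ follows from integrability of $J_1$ on $\ggo_1$, abelianness and $J_2$-compatibility on $\ggo_2$, and the stated compatibility condition for the mixed terms (again just \eqref{intJ} checked block by block). The main computational step is to show that the Chern-Ricci operator $P$ of $(\ggo,J,\omega)$ satisfies $P|_{\ggo_1}=cI$ and $P|_{\ggo_2}=0$. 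For this I would apply formula \eqref{CRform} and \eqref{CRop}. Vanishing on $\ggo_2$: any bracket $[X,Y]$ with $X$ or $Y$ in $\ggo_2=\kg$ lies in $\kg$, which is a central-like abelian ideal, so $\ad_{[X,Y]}$ and $\ad_{J[X,Y]}$ both act trivially on $\ggo_2$ and act on $\ggo_1$ only through... actually $[\ggo_1,\kg]\subseteq\kg$ and $[\kg,\kg]=0$, so $\ad_Z$ for $Z\in\kg$ sends $\ggo_1\to\kg$ and $\kg\to0$; hence $\tr(J\ad_Z)=0$ and $\tr(\ad_Z)=0$ for $Z\in\kg$, giving $p(X,Y)=0$ whenever $X$ or $Y\in\ggo_2$, so $P|_{\ggo_2}=0$. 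Value on $\ggo_1$: for $X,Y\in\ggo_1$, split $\ad_{[X,Y]}=\ad_{[X,Y]}^{\ggo_1}+\theta([X,Y])$ according to $\ggo=\ggo_1\oplus\ggo_2$ (note $[X,Y]\in\ggo_1$, so its $\ad$ preserves the decomposition); the $\ggo_1$-part contributes the Chern-Ricci form $p_1$ of $(\ggo_1,J_1,\omega_1)$, i.e. $\omega_1(P_1\cdot,\cdot)$, and the $\ggo_2$-part contributes exactly $-\omega_1(P_\theta\cdot,\cdot)$ by the definition of $P_\theta$. Thus $P|_{\ggo_1}=P_1+P_\theta=cI$ by the hypothesis $P_1=cI-P_\theta$. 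Since $\ggo_1$ is a subalgebra and $\ggo_2=\kg=\Ker P$ an abelian ideal, Proposition \ref{CR-sol2}(iii) applies to conclude $(\ggo,J,\omega)$ is a CR-soliton with constant $c$.

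For the K\"ahler characterization, $(\ggo,J,\omega)$ is K\"ahler iff $d\omega=0$. I would compute $d\omega(X,Y,Z)=-\omega([X,Y],Z)+\omega([X,Z],Y)-\omega([Y,Z],X)$ for the three cases according to how many of $X,Y,Z$ lie in $\ggo_1$ versus $\ggo_2$. If all three are in $\ggo_1$, one gets $d\omega_1(X,Y,Z)$, whose vanishing is closedness of $\omega_1$. If all three are in $\ggo_2$, $d\omega(X,Y,Z)=0$ automatically since $\ggo_2$ is abelian. If two are in $\ggo_2$, say $Y,Z\in\ggo_2$ and $X\in\ggo_1$: then $[Y,Z]=0$ and $[X,Y]=\theta(X)Y$, $[X,Z]=\theta(X)Z\in\ggo_2$, so $d\omega(X,Y,Z)=-\omega_2(\theta(X)Y,Z)+\omega_2(\theta(X)Z,Y)$, and this vanishes for all $Y,Z\in\ggo_2$ precisely when $\theta(X)$ is $\omega_2$-skew, i.e. $\theta(X)\in\spg(\ggo_2,\omega_2)$. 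If exactly one of $X,Y,Z$ is in $\ggo_2$: say $X,Y\in\ggo_1$, $Z\in\ggo_2$; then $[X,Y]\in\ggo_1$ so $\omega([X,Y],Z)=0$, while $[X,Z],[Y,Z]\in\ggo_2$ so $\omega([X,Z],Y)=\omega([Y,Z],X)=0$, and the term vanishes identically — no new condition. (One should double-check that under the integrability and skew conditions, the $\spg$-condition $\theta(X)^t=J_2\theta(X)J_2$ is the same as $\theta(X)$ being $\omega_2$-skew with $\omega_2=g_2(J_2\cdot,\cdot)$; this is a one-line linear-algebra identity using $g_2$-symmetry of $\theta(X)^t$ and $J_2$-invariance of $\omega_2$.) So $d\omega=0$ iff $\omega_1$ is closed and $\theta(\ggo_1)\subseteq\spg(\ggo_2,\omega_2)$, and then $(\ggo,J,\omega)$ is a K\"ahler CR-soliton, hence a K\"ahler-Ricci soliton since $p$ coincides with the Ricci form in the K\"ahler case. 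The main obstacle I anticipate is purely bookkeeping: carefully tracking which brackets land in $\ggo_1$ versus $\ggo_2$ when expanding \eqref{CRform}, so that the cross terms assemble exactly into $P_\theta$ as defined — everything else is a direct application of Proposition \ref{CR-sol2}.
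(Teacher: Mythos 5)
Your proposal is correct and follows essentially the same route as the paper: decompose $\ggo=\kg^\perp\ltimes\kg$ via Proposition \ref{CR-sol2} together with the $J$-invariance of the eigenspaces of $P$, identify the first condition with integrability \eqref{intJ} checked block by block, compute from \eqref{CRform} that $P=\left[\begin{smallmatrix} P_1+P_\theta&0\\ 0&0\end{smallmatrix}\right]$, and obtain the K\"ahler claim from the closedness of $\omega$. One minor slip: the cross terms $-\unm\tr(J_2\theta([X,Y]))+\unm\tr(\theta(J_1[X,Y]))$ contribute $+\omega_1(P_\theta X,Y)$, not $-\omega_1(P_\theta\cdot,\cdot)$ as written, which is precisely what your subsequent (correct) identity $P|_{\ggo_1}=P_1+P_\theta=cI$ uses.
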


\begin{proof}
It is easy to check that the first condition which must hold is equivalent to $J$ being integrable, and the second one comes from the fact that the Chern-Ricci operator of $(\ggo,J,\omega)$ is given by
$
P=\left[\begin{smallmatrix} P_1+P_\theta&0\\ 0&0\end{smallmatrix}\right].
$
The last claim on K\"ahler easily follows from the closeness condition for $\omega$.
\end{proof}

\begin{example}
We therefore obtain a Chern-Ricci soliton from any hermitian Lie algebra $(\ggo_1,J_1,\omega_1)$ with $P_1=cI$ (i.e. $p_1=c\omega_1$) and a representation $\theta:\ggo_1\longrightarrow\slg(\ggo_2,J_2)$ (i.e. $\tr{\theta(X)}=0$ and $[\theta(X),J_2]=0$ for all $X\in\ggo_1$); note that $P_\theta=0$ under such conditions.  If in addition $(\ggo_1,J_1,\omega_1)$ is K\"ahler-Einstein and
$$
\theta(\ggo_1)\subset\slg(\ggo_2,J_2)\cap\spg(\ggo_2,\omega_2)=\sug(\dim{\ggo_2}/2),
$$
then what we obtain is a K\"ahler-Ricci soliton, which is actually isometric to the direct product $G_1\times\RR^{\dim{\ggo_2}}$.
\end{example}

\section{Convergence}\label{CR-conv-sec}

We study in this section the possible limits of bracket flow solutions under diverse rescalings.

If a rescaling $c(t)\mu(t)$, $c(t)\in\RR$, of a bracket flow solution converges to $\lambda$, as $t\to T_\pm$, and $\vp(t):G\longrightarrow G_{c(t)\mu(t)}$ is the isomorphism with derivative $\tfrac{1}{c(t)}h(t)$, where $h(t)$ is as in Theorem \ref{eqfl}, then it follows from \cite[Corollary 6.20]{spacehm} that (after possibly passing to a subsequence) the Riemannian manifolds $\left(G,\tfrac{1}{c(t)^2}\omega(t)\right)$ converge in the pointed (or Cheeger-Gromov) sense to $(G_\lambda,\omega_0)$, as $t\to T_\pm$.  We note that $G_\lambda$ may be non-isomorphic, and even non-homeomorphic, to $G$ (see \cite[Section 5.1]{SCF}).

Recall also that all the limits obtained by any of such rescalings are automatically CR-solitons (see \cite[Section 7.1]{SCF}).

Two rescalings will be considered, the one given by the bracket norm $\mu(t)/|\mu(t)|$, which always converges, and $|2t+1|^{1/2}\mu(t)$, which corresponds according to the observation above to the standard rescaling $\omega(t)/(2t+1)$ of the original CRF-solution in the forward case.  We note that $\omega(t)/(2t+1)$ is, up to reparametrization in time, the solution to the {\it normalized Chern-Ricci flow}
\begin{equation}\label{nCRF}
\dpar\widetilde{\omega} = -2p(\widetilde{\omega}) - 2\widetilde{\omega}, \qquad\widetilde{\omega}(0)=\omega_0,
\end{equation}
which is the one preserving the volume in the compact K\"ahler case and has also been used in the general hermitian case (see e.g. \cite[Theorem 1.7]{TstWnk} and \cite{Gll}).

Let $(G,J,\omega_0)$ be a hermitian Lie group with Lie algebra $\ggo$ and Chern-Ricci operator $P_0$.  A straightforward analysis using \eqref{muijk} gives that $\mu(t)$ converges as $t\to T_\pm$ if and only if $T_\pm=\pm\infty$ (i.e. $\pm P_0\leq 0$) and $\Ker{P_0}$ is a Lie subalgebra of $\ggo$.  Moreover, the following conditions are equivalent in the case $T_\pm=\pm\infty$:

\begin{itemize}
\item $\mu(t)\to 0$, as $t\to\pm\infty$.
\item $\Ker{P_0}$ is an abelian ideal of $\ggo$.
\item $|2t+1|^{1/2}\mu(t)$ converges as $t\to\pm\infty$.
\end{itemize}

\begin{remark}
Any statement as the above ones, involving the $\pm$ sign, must always be understood as two separate statements, one for the $+$ sign and the other for the $-$ sign.
\end{remark}

In the case $\pm T_\pm<\infty$, it follows that $|T_\pm-t|^{1/2}\mu(t)$ converges as $t\to T_\pm$ if and only if $\ggo_\pm$ is a Lie subalgebra of $\ggo$,  where $\ggo_\pm$ is the eigenspace of $P_0$ of eigenvalue $p_\pm$ (see \eqref{CRFint}).

\begin{lemma}
If $\mu(t)\to\lambda$, as $t\to\pm\infty$, then $(G_\lambda,J,\omega_0)$ is Chern-Ricci flat.
\end{lemma}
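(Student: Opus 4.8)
The plan is to use the convergence $\mu(t)\to\lambda$ to show that the Chern-Ricci operator $P_\lambda$ of $(G_\lambda,J,\omega_0)$ is a derivation of $(\ggo,\lambda)$, and then to feed this into the structure result for Chern-Ricci solitons (Proposition \ref{CR-sol2}) to conclude $P_\lambda=0$. Note first that $\lambda$ is indeed a Lie bracket for which $J$ is integrable, since the algebraic set of such brackets is invariant (hence closed) under the bracket flow; so $(G_\lambda,J,\omega_0)$ is a genuine hermitian Lie group and $P_\lambda$ makes sense.

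For the first step, I would observe that the right-hand side of the bracket flow \eqref{BF}, the map $\mu\mapsto\delta_\mu(P_\mu)$, is continuous (in fact polynomial) in $\mu$: the Chern-Ricci operator $P_\mu$ depends polynomially on the structure constants of $\mu$ via \eqref{CRform} and \eqref{CRop}, and $\delta_\mu$ is bilinear in its arguments. Hence $\ddt\mu(t)=\delta_{\mu(t)}(P_{\mu(t)})\to\delta_\lambda(P_\lambda)$ as $t\to\pm\infty$. Since $\mu(t)$ itself converges, a standard calculus argument in the finite-dimensional space $\Lambda^2\ggo^*\otimes\ggo$ forces the limit of the velocities to vanish: if a $C^1$ curve and its derivative both converge, the derivative converges to $0$, for otherwise some linear coordinate of the curve would eventually be monotone with slope bounded away from $0$, hence unbounded. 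Thus $\delta_\lambda(P_\lambda)=0$, which by the very definition of $\delta_\lambda$ means $\lambda(P_\lambda\cdot,\cdot)+\lambda(\cdot,P_\lambda\cdot)=P_\lambda\lambda(\cdot,\cdot)$, i.e. $P_\lambda\in\Der(\ggo,\lambda)$.

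For the second step, recall that $P_\lambda$ is symmetric with respect to $g_0$ and commutes with $J$. Hence, taking $c=0$ and $D=P_\lambda$ in Definition \ref{CRS}, $(G_\lambda,J,\omega_0)$ is a Chern-Ricci soliton with constant $c=0$. If $P_\lambda\ne 0$, then Proposition \ref{CR-sol2} applies, and the implication (i)$\Rightarrow$(iii) (with $c=0$) says that all the eigenvalues of $P_\lambda$ are $0$, i.e. $P_\lambda=0$, a contradiction. Therefore $P_\lambda=0$ and $(G_\lambda,J,\omega_0)$ is Chern-Ricci flat.

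The only subtle point is the passage from the convergence of $\mu(t)$ to $\ddt\mu(t)\to 0$, which relies on working in a fixed finite-dimensional vector space with a continuous flow vector field; everything else is formal. Alternatively, one can avoid the derivation argument altogether: since $(G,J,\omega(t))$ and $(G_{\mu(t)},J,\omega_0)$ are equivalent as hermitian manifolds via $h(t)$ (Theorem \ref{eqfl}), their Chern scalar curvatures agree, so $\tr P_\lambda=\lim_{t\to\pm\infty}\sum_i\frac{p_i}{1-2tp_i}=0$; since by Proposition \ref{CR-sol2}(iii) a CR-soliton with $P\ne 0$ has all eigenvalues of $P$ equal to $0$ or to some nonzero $c$ with nonzero $c$-eigenspace, this again forces $P_\lambda=0$.
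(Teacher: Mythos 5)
Your proof is correct, and it finishes differently from the paper's. Both arguments hinge on the same initial observation, namely that the limit $\lambda$ is a fixed point of the bracket flow \eqref{BF} (your derivative-tends-to-zero argument is a careful, explicit justification of what the paper simply asserts), but the conclusions drawn from it diverge: the paper notes that a fixed point gives a solution defined on all of $(-\infty,\infty)$ and then reads off $P_\lambda=0$ directly from the explicit existence interval \eqref{CRFint}, whereas you translate $\delta_\lambda(P_\lambda)=0$ into ``$P_\lambda$ is a symmetric derivation commuting with $J$'', hence a CR-soliton with $c=0$, and then invoke the structure result Proposition \ref{CR-sol2} to force $P_\lambda=0$. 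The paper's route is shorter and more self-contained (it only uses the closed-form solution of the flow), while yours buys a slightly more structural statement -- it isolates the fact that a nonzero Chern-Ricci operator can never be a derivation -- at the cost of importing the quoted soliton classification. Two small points to tidy up: the set of integrable brackets is closed because it is an \emph{algebraic} subset of $\Lambda^2\ggo^*\otimes\ggo$, not because it is invariant under the flow (invariance alone gives nothing about limits), so the parenthetical ``(hence closed)'' should be rephrased; and your claimed shortcut via the Chern scalar curvature does not really ``avoid the derivation argument altogether'', since applying Proposition \ref{CR-sol2}(iii) there still presupposes that $(G_\lambda,J,\omega_0)$ is a CR-soliton, which within your write-up is exactly what the derivation argument (or the recalled fact from \cite[Section 7.1]{SCF}) provides.
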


\begin{proof}
We have that $\lambda$ is a fixed point and so the solution starting at $\lambda$ is defined on $(-\infty,\infty)$, which implies that $P_\lambda=0$ by \eqref{CRFint}.
\end{proof}

We now explore in which way is the limit of the normalization $\mu(t)/|\mu(t)|$ related to the starting point $(G,J,\omega_0)$.  The norm $|\mu|$ of a Lie bracket will be defined in terms of the canonical inner product on $\Lambda^2\ggo^*\otimes\ggo$ given by
\begin{equation}\label{ipg}
\la\mu,\lambda\ra:=\sum g_0(\mu(e_i,e_j),\lambda(e_i,e_j)) =\sum\mu_{ij}^k\lambda_{ij}^k,
\end{equation}
where $\{ e_i\}$ is any orthonormal basis of $(\ggo,g_0)$ and $g_0=\omega_0(\cdot,J\cdot)$.  A natural inner product on $\End(\ggo)$ is also determined by $g_0$ by $\la A,B\ra:=\tr{AB^t}$.

\begin{proposition}\label{CR-conv}
Let $(G,J,\omega_0)$ be a hermitian Lie group with Lie algebra $\ggo$ and Chern-Ricci operator $P_0$, and let $\kg$, $\ggo_+$ and $\ggo_-$ denote the eigenspaces of $P_0$ of eigenvalues $0$, $p_+$ and $p_-$, respectively (see \eqref{CRFint}).

\begin{itemize}
\item[(i)] The normalized Chern-Ricci bracket flow $\mu(t)/|\mu(t)|$ always converges, as $t\to T_\pm$, to a nonabelian Lie bracket $\lambda_\pm$ such that $(G_{\lambda_\pm},J,\omega_0)$ is a Chern-Ricci soliton, say with Chern-Ricci operator $P_{\lambda_\pm}$.
\item[ ]
\item[(ii)] If $\pm P_0\leq 0$ (i.e. $\pm T_\pm=\infty$), then $P_{\lambda_\pm}|_{\kg^\perp}=c_\pm I$, $P_{\lambda_\pm}|_{\kg}=0$, with $\pm c_\pm<0$ if and only if $\kg$ is an abelian ideal of $\ggo$.  Otherwise, $P_{\lambda_\pm}=0$.
\item[ ]
\item[(iii)] In the case when $\pm T_\pm<\infty$, one has $P_{\lambda_\pm}|_{\ggo_\pm}=c_\pm I$, $P_{\lambda_\pm}|_{(\ggo_\pm)^\perp}=0$, with $\pm c_\pm >0$ if and only if $\ggo_\pm$ is a Lie subalgebra of $\ggo$.  Otherwise, $P_{\lambda_\pm}=0$.
\end{itemize}
\end{proposition}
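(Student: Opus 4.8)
The plan is to argue entirely from the explicit bracket flow solution. First I would fix an orthonormal basis $\{e_1,\dots,e_{2n}\}$ of eigenvectors of $P_0$ with eigenvalues $p_1,\dots,p_{2n}$ and use \eqref{muijk} for the structure constants $\mu_{ij}^k(t)$ of $\mu(t)$; by replacing $t$ with $-t$ (equivalently $P_0$ with $-P_0$) it is enough to treat the $+$ sign, the $-$ sign assertions then following with $T_-$ and $\ggo_-$ in the roles of $T_+$ and $\ggo_+$, and I may assume $P_0\ne0$, the case $P_0=0$ being a stationary solution for which the assertions are trivial or vacuous. Put $\epsilon_i:=1$ if $p_i<0$ and $\epsilon_i:=0$ otherwise (used when $T_+=\infty$), and $\eta_i:=1$ if $p_i=p_+$ and $\eta_i:=0$ otherwise (used when $T_+<\infty$, where $p_+=1/(2T_+)$). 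Inspecting \eqref{muijk}, each $\mu_{ij}^k(t)$ with $c_{ij}^k\ne0$ has a pure power order, with positive leading constant, as $t\to T_+$: order $t^{(\epsilon_k-\epsilon_i-\epsilon_j)/2}$ if $T_+=\infty$, and order $(T_+-t)^{(\eta_k-\eta_i-\eta_j)/2}$ if $T_+<\infty$. Hence $|\mu(t)|^2$, a finite sum of squares of these, has a pure power asymptotic $A\,t^E$ (resp.\ $A\,(T_+-t)^E$) with $A>0$, where $E$ is the maximum of $\epsilon_k-\epsilon_i-\epsilon_j$ (resp.\ the minimum of $\eta_k-\eta_i-\eta_j$) over the nonzero structure constants; so each $\mu_{ij}^k(t)/|\mu(t)|$ converges and $\mu(t)/|\mu(t)|\to\lambda_+$ for a Lie bracket $\lambda_+$ with $|\lambda_+|=1$, in particular nonabelian. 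The soliton property of $(G_{\lambda_+},J,\omega_0)$ in (i) I would quote from \cite[Section 7.1]{SCF}, or re-derive at the end.

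Next I would identify $P_{\lambda_+}$. By Theorem \ref{eqfl}, $(G_{\mu(t)},J,\omega_0)$ is equivalent to $(G,J,\omega(t))$ via $h(t)=(I-2tP_0)^{1/2}$, which commutes with $P(t)=(I-2tP_0)^{-1}P_0$ (both are functions of $P_0$), so the Chern-Ricci operator of $(G_{\mu(t)},J,\omega_0)$ is exactly $P(t)$. Since $\mu\mapsto P_\mu$ is, by \eqref{CRform}, homogeneous of degree $2$ in the bracket and continuous, $P_{\mu(t)/|\mu(t)|}=P(t)/|\mu(t)|^2$, whence
$$
P_{\lambda_+}=\lim_{t\to T_+}\frac{1}{|\mu(t)|^2}\,P(t).
$$
The crucial observation is that on the eigenspace $V:=\kg^\perp$ (when $T_+=\infty$), resp.\ $V:=\ggo_+$ (when $T_+<\infty$), the eigenvalues $p_i/(1-2tp_i)$ of $P(t)$ are all asymptotic, as $t\to T_+$, to one and the same scalar --- to $-\tfrac1{2t}$ in the first case (indeed $P(t)|_{\kg^\perp}=-\tfrac1{2t}I+O(t^{-2})$), and exactly $\tfrac1{2(T_+-t)}$ in the second (since $1-2tp_i=2p_+(T_+-t)$ whenever $p_i=p_+$) --- while the eigenvalues on $V^\perp$ stay bounded. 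As $P(t)/|\mu(t)|^2$ converges to the finite operator $P_{\lambda_+}$ while the $V$-block of $P(t)$ is a nonzero scalar times $t^{-1}$ (resp.\ $(T_+-t)^{-1}$), the scalar $t^{-1}/|\mu(t)|^2$ (resp.\ $(T_+-t)^{-1}/|\mu(t)|^2$) must converge, so $t\,|\mu(t)|^2$ (resp.\ $(T_+-t)\,|\mu(t)|^2$) stays bounded away from $0$; this forces $E\ge-1$ (resp.\ $E\le-1$), and also $P(t)|_{V^\perp}/|\mu(t)|^2\to0$ (as $P(t)$ vanishes on $\kg$, resp.\ as $|\mu(t)|^2\to\infty$ since then $E<0$). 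We conclude $P_{\lambda_+}|_{V^\perp}=0$ and $P_{\lambda_+}|_V=c_+I$, with $c_+\le0$ if $T_+=\infty$ and $c_+\ge0$ if $T_+<\infty$ --- the sign being that of the scalar above.

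It then remains to decide when $c_+=0$. From the displayed limit, $c_+\ne0$ exactly when $t\,|\mu(t)|^2$ (resp.\ $(T_+-t)\,|\mu(t)|^2$) has a finite nonzero limit, i.e.\ exactly when $E=-1$. I would translate this: when $T_+=\infty$, a nonzero structure constant with $\epsilon_k-\epsilon_i-\epsilon_j\ge0$ exists iff $[\kg,\kg]\ne0$ or $[\kg^\perp,\kg]\not\subset\kg$, that is, iff $\kg$ is not an abelian ideal of $\ggo$; and if $\kg$ is an abelian ideal then all nonzero constants have $\epsilon_k-\epsilon_i-\epsilon_j\le-1$ and cannot all equal $-2$ (which would force $\ggo$ two-step nilpotent, hence $P_0=0$ by Proposition \ref{Pnilp}), so $E=-1$. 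When $T_+<\infty$, a nonzero structure constant with $\eta_k-\eta_i-\eta_j\le-2$ exists iff $[\ggo_+,\ggo_+]\not\subset\ggo_+$, that is, iff $\ggo_+$ is not a subalgebra of $\ggo$; since $E\le-1$ always, this is the negation of $E=-1$. This yields (ii) and (iii). Finally, when $P_{\lambda_+}\ne0$ the structure constants of $\lambda_+$ that survive in the limit are precisely those realizing $E=-1$, and reading them off shows that $V^\perp$ is an abelian ideal and $V$ a subalgebra of $(\ggo,\lambda_+)$; so Proposition \ref{CR-sol2} confirms $\lambda_+$ is a Chern-Ricci soliton --- trivially so when $P_{\lambda_+}=0$ --- which completes (i).

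The only genuinely conceptual ingredient is the single-scalar observation of the second step, which pins $P_{\lambda_+}$ down to a multiple of the identity on $V$ and to zero on $V^\perp$; the rest is bookkeeping. I expect the hard part to be exactly that bookkeeping: keeping the exponents $\epsilon_i,\eta_i$ and the extremal order $E$ straight, matching the equality $E=-1$ with the precise conditions ``$\kg$ is an abelian ideal'' and ``$\ggo_+$ is a subalgebra'', and correctly disposing of the degenerate cases ($P_0=0$, $\ggo$ abelian, $\ggo$ two-step nilpotent) where $\mu(t)/|\mu(t)|$ is trivial or undefined.
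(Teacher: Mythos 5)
Your proposal is correct and follows essentially the same route as the paper: the explicit structure constants \eqref{muijk}, the identity $P_{\mu(t)/|\mu(t)|}=P(t)/|\mu(t)|^2$ coming from degree-two homogeneity of $\mu\mapsto P_\mu$, and a case analysis over eigenvalue types to see which terms dominate as $t\to T_\pm$. The paper computes the limits $\lim_{t\to T_+} p_r/\bigl(|\mu|^2(1-2tp_r)\bigr)$ explicitly instead of organizing things through your extremal exponent $E$, but the content is the same; your appeal to Proposition \ref{Pnilp} to rule out the all-exponent-$(-2)$ (two-step nilpotent) case just makes explicit a point the paper leaves implicit.
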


\begin{remark}
In particular, the only way to obtain in the limit the Einstein-like condition $p_{\lambda_\pm}=c\omega_0$ with $c\ne 0$, is precisely when $\pm P_0<0$.
\end{remark}

\begin{remark}
It follows from the last paragraph in \cite[Section 5.1]{SCF} that when $P_{\lambda_\pm}=0$, it actually holds that $P_\nu=0$ for any limit $\nu=\lim\limits_{t\to T_\pm} c(t)\mu(t)$ and any rescaling of the form $c(t)\mu(t)$, with $c(t)\in\RR$.
\end{remark}

\begin{proof}
It follows from \eqref{ipg} and \eqref{muijk} that
\begin{equation}\label{convsc}
\frac{\mu_{rs}^l}{|\mu|}=\frac{c_{rs}^l}{\left(\sum\limits_{i,j,k} \frac{(1-2tp_k)(1-2tp_r)(1-2tp_s)}{(1-2tp_i)(1-2tp_j)(1-2tp_l)} \left(c_{ij}^k\right)^2\right)^{1/2}} \underset{t\to T_\pm}\longrightarrow (\lambda_\pm)_{rs}^l.
\end{equation}
Since each of the terms in the sum above converges, as $t\to T_\pm$, to either a nonnegative real number or $\infty$, we obtain that $\mu(t)/|\mu(t)|$ always converges, and so part (i) follows.

We will only prove the $+$-statements, the proofs for those with a $-$ sign are completely analogous.  Since $P_{\mu/|\mu|}=\tfrac{1}{|\mu|^2}P\to P_{\lambda_+}$, as $t\to T_+$ (recall that $P_{\mu(t)}=P(t)=(I-2tP_0)^{-1}P_0$), one can easily check that for each eigenvalue $p_r$ of $P_0$,
$$
\begin{array}{l}
\lim\limits_{t\to T_+}\frac{p_r}{|\mu|^2(1-2tp_r)} = \lim\limits_{t\to T_+}\frac{p_r}{\sum\limits_{i,j,k}\frac{(1-2tp_k)(1-2tp_r)}{(1-2tp_i)(1-2tp_j)}\left(c_{ij}^k\right)^2} \\ \\
= \left\{\begin{array}{ll}
\frac{1}{\sum\limits_{p_i,p_j,p_k<0}\frac{p_k}{p_ip_j}\left(c_{ij}^k\right)^2 +2\sum\limits_{p_i<0,p_j=p_k=0}\frac{1}{p_i}\left(c_{ij}^k\right)^2}<0, & T_+=\infty, \quad p_r<0, \quad\kg\; \mbox{abelian ideal}; \\ \\
0, & T_+=\infty, \quad \mbox{otherwise};  \\ \\
\frac{p_+}{\sum\limits_{i,j,k=+}\left(c_{ij}^k\right)^2 +2\sum\limits_{i,k\ne +, j=+}\frac{p_+-p_k}{p_+-p_i}\left(c_{ij}^k\right)^2}>0, & T_+<\infty, \quad p_r=p_+, \quad\ggo_+\; \mbox{subalgebra}; \\ \\
0, & T_+<\infty, \quad \mbox{otherwise}.
\end{array}\right.
\end{array}
$$
This shows that the value of $P_{\lambda_+}$ is as in parts (ii) and (iii), concluding the proof of the proposition.
\end{proof}

\begin{proposition}\label{CR-conv3}
Let $(G,J,\omega_0)$ be a hermitian Lie group as in the above proposition and consider $\lambda_\pm$, the limit of $\mu(t)/|\mu(t)|$ as $t\to T_\pm$.

\begin{itemize}
\item[(i)] If $\pm P_0\leq 0$ (i.e. $\pm T_\pm=\infty$) and $\kg$ is an abelian ideal of $\ggo$, then $(\ggo,\lambda_\pm)=\kg^\perp\ltimes\kg$ and $\lambda_\pm(\kg,\kg)=0$.  On the contrary, if $\kg$ is not an abelian ideal of $\ggo$, then $\kg^\perp$ is an abelian ideal of $(\ggo,\lambda_\pm)$.  Moreover, if $\kg$ is not even a Lie subalgebra of $\ggo$, then $\lambda_\pm$ is $2$-step nilpotent and $\kg^\perp$ is contained in its center.
\item[ ]
\item[(ii)] If $\pm T_\pm<\infty$ and $\ggo_\pm$ is a Lie subalgebra of $\ggo$, then $(\ggo,\lambda_\pm)=\ggo_\pm\ltimes\ggo_\pm^\perp$ and $\lambda_\pm(\ggo_\pm^\perp,\ggo_\pm^\perp)=0$.  On the contrary, if $\ggo_\pm$ is not a Lie subalgebra of $\ggo$, then $\lambda_\pm$ is $2$-step nilpotent and $\ggo_\pm^\perp$ is contained in its center.
\end{itemize}
\end{proposition}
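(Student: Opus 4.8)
The plan is to combine the explicit formula \eqref{muijk} for the bracket flow with the two structural results already established, Propositions \ref{CR-conv} and \ref{CR-sol2}. As usual it is enough to prove the $+$ statements. Fix an orthonormal basis $\{e_i\}$ of eigenvectors of $P_0$, with eigenvalues $p_i$, so that $\mu_{ij}^k(t)=\left(\frac{1-2tp_k}{(1-2tp_i)(1-2tp_j)}\right)^{1/2}c_{ij}^k$ and $|\mu(t)|^2=\sum_{i,j,k}(\mu_{ij}^k)^2$.

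I would first settle the two cases --- $\kg=\Ker P_0$ an abelian ideal in part (i), and $\ggo_+$ a subalgebra in part (ii) --- in which the limit soliton turns out to have $P_{\lambda_+}\neq0$. If $\kg$ is an abelian ideal of $\ggo$, then Proposition \ref{CR-conv}(ii) gives $P_{\lambda_+}|_{\kg^\perp}=c_+I$ with $c_+\neq0$ and $P_{\lambda_+}|_{\kg}=0$, so $\Ker P_{\lambda_+}=\kg$; likewise, if $T_+<\infty$ and $\ggo_+$ is a subalgebra of $\ggo$, then Proposition \ref{CR-conv}(iii) gives $P_{\lambda_+}|_{\ggo_+}=c_+I$ with $c_+\neq0$ and $P_{\lambda_+}|_{\ggo_+^\perp}=0$, so $\Ker P_{\lambda_+}=\ggo_+^\perp$. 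In either case $P_{\lambda_+}\neq0$, and since $(G_{\lambda_+},J,\omega_0)$ is a Chern-Ricci soliton (Proposition \ref{CR-conv}(i)), Proposition \ref{CR-sol2}(iii) shows that $\Ker P_{\lambda_+}$ is an abelian ideal of $(\ggo,\lambda_+)$ whose orthogonal complement, namely the $c_+$-eigenspace of $P_{\lambda_+}$, is a Lie subalgebra of $(\ggo,\lambda_+)$. This is precisely the assertion that $(\ggo,\lambda_+)=\kg^\perp\ltimes\kg$ with $\lambda_+(\kg,\kg)=0$ in part (i), and that $(\ggo,\lambda_+)=\ggo_+\ltimes\ggo_+^\perp$ with $\lambda_+(\ggo_+^\perp,\ggo_+^\perp)=0$ in part (ii).

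For the remaining cases, in which $P_{\lambda_+}=0$, I would argue directly from \eqref{muijk} by an order-of-growth comparison. To each index $m$ attach $\epsilon_m\in\{0,1\}$, equal to $1$ exactly when $e_m\in\kg^\perp$ in part (i) (where $T_+=\infty$ and all $p_m\le0$) and exactly when $e_m\in\ggo_+$ in part (ii) (where $T_+<\infty$). Writing $u=t\to\infty$ in the first regime and $u=T_+-t\to0^+$ in the second, each factor $1-2tp_m$ has order $u^{\epsilon_m}$ with strictly positive leading coefficient, so $\mu_{ij}^k(t)$ has order $u^{\frac12(\epsilon_k-\epsilon_i-\epsilon_j)}$; since $|\mu(t)|^2$ is a sum of squares, no cancellation occurs, and $|\mu(t)|$ has order $u^E$, with $E$ the maximum of $\frac12(\epsilon_k-\epsilon_i-\epsilon_j)$ over the triples with $c_{ij}^k\neq0$ in the first regime and the minimum of the same quantity in the second regime. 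Consequently $(\lambda_+)_{ij}^k=\lim_{t\to T_+}\mu_{ij}^k/|\mu|$ is nonzero precisely when $c_{ij}^k\neq0$ and $\frac12(\epsilon_k-\epsilon_i-\epsilon_j)=E$. If $\kg$ is not a subalgebra, then $[\kg,\kg]\not\subseteq\kg$ produces a nonzero $c_{ij}^k$ of type $(\epsilon_i,\epsilon_j,\epsilon_k)=(0,0,1)$, whose exponent $\tfrac12$ is the largest possible value of $\tfrac12(\epsilon_k-\epsilon_i-\epsilon_j)$; hence $E=\tfrac12$, the surviving triples are exactly those with $\epsilon_i=\epsilon_j=0$ and $\epsilon_k=1$, so $\lambda_+(\kg,\kg)\subseteq\kg^\perp$ while $\lambda_+$ vanishes on every other pair of basis vectors, and therefore $\kg^\perp$ is central (in particular an abelian ideal), $\lambda_+(\ggo,\ggo)\subseteq\kg^\perp$, and $\lambda_+$ is $2$-step nilpotent (it is nonabelian by Proposition \ref{CR-conv}(i)). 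If $\kg$ is a subalgebra but not an abelian ideal, there are no type-$(0,0,1)$ terms, the value $E=0$ is attained --- by a bracket inside $\kg$ if $\kg$ is nonabelian, or by the $\kg^\perp$-component of $[\kg^\perp,\kg]$ if $\kg$ is abelian but not an ideal --- and the surviving triples are those with $\epsilon_k=\epsilon_i+\epsilon_j$; hence $\lambda_+(\kg,\kg)\subseteq\kg$, $\lambda_+(\kg^\perp,\kg)\subseteq\kg^\perp$ and $\lambda_+(\kg^\perp,\kg^\perp)=0$, so $\kg^\perp$ is an abelian ideal of $(\ggo,\lambda_+)$. Finally, if $T_+<\infty$ and $\ggo_+$ is not a subalgebra, then $[\ggo_+,\ggo_+]\not\subseteq\ggo_+$ produces a nonzero $c_{ij}^k$ of type $(1,1,0)$, whose exponent $-1$ is the smallest possible value of $\tfrac12(\epsilon_k-\epsilon_i-\epsilon_j)$; hence $E=-1$, the surviving triples are those with $\epsilon_i=\epsilon_j=1$ and $\epsilon_k=0$, so $\lambda_+(\ggo_+,\ggo_+)\subseteq\ggo_+^\perp$ while $\lambda_+$ vanishes elsewhere, and therefore $\ggo_+^\perp$ is central and $\lambda_+$ is $2$-step nilpotent.

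I expect the main obstacle to be the bookkeeping in this last step: under each hypothesis on $\kg$ or on $\ggo_+$ one has to pin down which of the possible types $(\epsilon_i,\epsilon_j,\epsilon_k)\in\{0,1\}^3$ can be realized by a nonzero structure constant --- so that the claimed extremal value $E$ is attained and no larger (respectively, smaller) exponent occurs --- and which cannot, so that the corresponding components of $\lambda_+$ are forced to vanish in the limit. The positivity of the leading coefficients of the functions $1-2tp_m$ in the variable $u$ is exactly what rules out an accidental cancellation in the sum of squares $|\mu(t)|^2$ and makes the order estimate for $|\mu(t)|$ valid.
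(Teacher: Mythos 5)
Your proof is correct and follows essentially the same route as the paper: the semidirect-product claims are read off from Proposition \ref{CR-sol2}(iii) applied to the limit soliton identified in Proposition \ref{CR-conv}, and the remaining claims come from the asymptotics of the explicit solution \eqref{muijk}. Your dominant-exponent bookkeeping (the $\epsilon_m$'s and $E$) is just a more systematic packaging of the paper's term-by-term blow-up analysis of the sum in \eqref{convsc}, with the added bonus that it pins down exactly which components of $\lambda_\pm$ survive.
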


\begin{proof}
The first claims in the items are both direct consequences of Proposition \ref{CR-sol2}, (iii).  As above, we only prove the $+$-statements.

If $\kg$ is not an abelian ideal of $\ggo$, then there is a $c_{ij}^k\ne 0$ with either $p_i,p_j,p_k=0$, or $p_ip_j=0$ and $p_k<0$.  The corresponding term in the sum appearing in formula \eqref{convsc} therefore converges to $\infty$ for any triple $(r,s,l)$ such that either $p_r,p_s,p_l<0$, or $p_l=0$, or $p_l<0$ and at least one of $p_r,p_s$ is negative.  This implies that $\lambda_{rs}^l=0$ for all such triples and hence $\lambda_+(\kg^\perp,\kg^\perp)=0$ and $\lambda_+(\ggo,\kg^\perp)\subset\kg^\perp$, respectively.

Assume now that $\kg$ is not a subalgebra of $\ggo$.  Thus there is a $c_{ij}^k\ne 0$ with $p_i,p_j=0$ and $p_k<0$.  The corresponding term in \eqref{convsc} therefore converges to $\infty$ for any triple $(r,s,l)$ such that either $p_l=0$, or $p_l<0$ and at least one of $p_r,p_s$ is negative.  This implies that $\lambda_+(\ggo,\ggo)\subset\kg^\perp$ and $\lambda_+(\ggo,\kg^\perp)=0$, respectively.  The second claim in part (i) therefore follows.

It only remains to prove the second claim in part (ii).  If $\ggo_+$ is not a subalgebra of $\ggo$, then there is a $c_{ij}^k\ne 0$ with $p_i,p_j=p_+$ and $p_l\ne p_+$.  Thus the corresponding term in \eqref{convsc} does not converge to $\infty$ if and only if $p_r=p_s=p_+$ and $p_l\ne p_+$, that is, the only part of $\lambda_+$ which survives is $\lambda_+:\ggo_+\times\ggo_+\longrightarrow\ggo_+^\perp$, as was to be shown.
\end{proof}

We now study the rescaling $|2t+1|^{1/2}\mu(t)$, or equivalently $\omega(t)/(2t+1)$, corresponding to the normalized CRF given in \eqref{nCRF}.  Recall that we always denote by $\lb$ the Lie bracket of the Lie algebra $\ggo$ of the Lie group $G$.

\begin{proposition}\label{CR-conv2}
Let $(G,J,\omega_0)$ be a hermitian Lie group as in the propositions above.

\begin{itemize}
\item[(i)] If $\pm P_0\leq 0$ (i.e. $\pm T_\pm=\infty$) and $\kg$ is an abelian ideal of $\ggo$, then $|2t+1|^{1/2}\mu(t)$ converges, as $t\to\pm\infty$, to a Chern-Ricci soliton $\nu_\pm$ such that $(\ggo,\nu_\pm)=\kg^\perp\ltimes\kg$, $\nu_\pm(\kg,\kg)=0$ and with Chern-Ricci operator given by $P_{\nu_\pm}|_{\kg^\perp}=\mp I$, $P_{\nu_\pm}|_{\kg}=0$.

\item[(ii)] If $\ggo^\pm$ is a nonzero Lie subalgebra of $\ggo$, then $\pm T_\pm<\infty$ and $|T_\pm-t|^{1/2}\mu(t)$ converges, as $t\to T_\pm$, to a Chern-Ricci soliton $\nu_\pm$ such that $(\ggo,\nu_\pm)=\ggo_\pm\ltimes\ggo_\pm^\perp$, $\nu_\pm(\ggo_\pm^\perp,\ggo_\pm^\perp)=0$ and with $P_{\nu_\pm}|_{\ggo^\pm}=\pm\unm I$, $P_{\nu_\pm}|_{(\ggo^\pm)^\perp}=0$.
\end{itemize}
\end{proposition}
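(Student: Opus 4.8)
The plan is to derive the convergence and the value of the Chern-Ricci operator directly from the explicit formula \eqref{muijk} for the structure coefficients, together with the results already established in this section. Consider first part (i), so $\pm P_0 \le 0$ and $\kg = \Ker P_0$ is an abelian ideal of $\ggo$. By the equivalences listed just before Proposition \ref{CR-conv}, the hypothesis on $\kg$ is exactly the condition guaranteeing that $|2t+1|^{1/2}\mu(t)$ converges as $t\to\pm\infty$; call the limit $\nu_\pm$. Since $\nu_\pm = \lim_{t\to\pm\infty}|2t+1|^{1/2}\mu(t)$ and $\lambda_\pm = \lim_{t\to\pm\infty}\mu(t)/|\mu(t)|$ differ only by a positive scalar factor (namely $\lim |2t+1|^{1/2}|\mu(t)|$, which is finite and positive precisely because both limits exist and $\lambda_\pm$ is nonabelian by Proposition \ref{CR-conv}(i)), the bracket $\nu_\pm$ has the same structural description as $\lambda_\pm$: by Proposition \ref{CR-conv3}(i), $(\ggo,\nu_\pm)=\kg^\perp\ltimes\kg$ with $\nu_\pm(\kg,\kg)=0$, and its Chern-Ricci operator is a positive multiple of $P_{\lambda_\pm}$, hence by Proposition \ref{CR-conv}(ii) of the form $P_{\nu_\pm}|_{\kg^\perp}=c_\pm I$, $P_{\nu_\pm}|_{\kg}=0$ with $\pm c_\pm<0$. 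So the only thing left for part (i) is to pin down the constant: $c_+=-1$ (for the $+$ case) and $c_-=1$ (for the $-$ case).

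To compute the constant I would use the self-similarity of the normalized flow. Since $\omega(t)/(2t+1)$ is, up to time reparametrization, the solution of the normalized Chern-Ricci flow \eqref{nCRF}, and its limit $\nu_\pm$ is a fixed point of that normalized flow, one has $P_{\nu_\pm}=\mp I$ on the complement of the kernel: indeed a fixed point $\nu$ of \eqref{nCRF} satisfies $\delta_\nu(P_\nu)+\delta_\nu(I)=0$ — more directly, from $\ddt\mu=\delta_\mu(P_\mu)$ and the rescaling $\nu(t)=|2t+1|^{1/2}\mu(t)$ one gets $\ddt\nu = \delta_\nu(P_\nu) \pm \unm\delta_\nu(I)\cdot\frac{1}{|2t+1|}$-type terms which at the limit force $P_{\nu_\pm}=\mp\unm\cdot(\text{something})$. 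Rather than push this through, the cleanest route is a direct asymptotic analysis of $P(t)=(I-2tP_0)^{-1}P_0$ rescaled: on $\kg^\perp$ in the $+$ case, every eigenvalue $p_r<0$ of $P_0$ gives $\frac{p_r}{1-2tp_r}\sim \frac{p_r}{-2tp_r}=-\frac{1}{2t}$, so $\frac{1}{2t}P(t)\to -\unm I$ on $\kg^\perp$, while the scaling factor relating $\omega(t)$ and $\omega(t)/(2t+1)$ converts $P(t)$ to $(2t+1)P(t)\to -I$ on $\kg^\perp$ in the limit; the $-$ case is identical with signs reversed. Combined with the already-known fact that the limit is a Chern-Ricci soliton and with Proposition \ref{CR-conv3}(i) for the Lie-algebra structure, this gives (i).

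For part (ii), the hypothesis is that $\ggo^\pm$ (the $p_\pm$-eigenspace of $P_0$, with $p_+>0$ or $p_-<0$) is a nonzero subalgebra. Nonzero and containing a nonzero eigenvalue forces $\pm T_\pm<\infty$ from \eqref{CRFint}, and by the remark following Proposition \ref{CR-conv}, $\ggo_\pm$ being a subalgebra is exactly the condition for $|T_\pm-t|^{1/2}\mu(t)$ to converge; again the limit $\nu_\pm$ is a positive multiple of $\lambda_\pm$, so Proposition \ref{CR-conv3}(ii) gives $(\ggo,\nu_\pm)=\ggo_\pm\ltimes\ggo_\pm^\perp$ with $\nu_\pm(\ggo_\pm^\perp,\ggo_\pm^\perp)=0$, and Proposition \ref{CR-conv}(iii) gives $P_{\nu_\pm}=c_\pm I$ on $\ggo_\pm$, $0$ on $(\ggo_\pm)^\perp$, with $\pm c_\pm>0$. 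The constant: near $T_\pm=\frac{1}{2p_\pm}$, on $\ggo_\pm$ the eigenvalue behaves as $\frac{p_\pm}{1-2tp_\pm}=\frac{p_\pm}{2p_\pm(T_\pm-t)}=\frac{1}{2(T_\pm-t)}$, so $(T_\pm-t)P(t)\to\unm I$ on $\ggo_\pm$; the rescaling by $|T_\pm-t|$ on the metric side turns this into $P_{\nu_\pm}|_{\ggo^\pm}=\pm\unm I$ after tracking the sign of $T_\pm-t$ (positive for $+$, negative for $-$), which accounts for the $\pm\unm$.

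I expect the main obstacle to be bookkeeping rather than mathematics: correctly matching the scaling factor $a(t)$ of the metric with the scaling $c(t)$ of the bracket (the relation $a(t)=1/c(t)^2$ from \cite[Corollary 6.20]{spacehm}), and getting every sign right in the two simultaneous $\pm$ statements, since the forward and backward cases reverse the sign of $T_\pm-t$ and of the relevant eigenvalues. A secondary point needing care is justifying that the positive scalar $\lim|2t+1|^{1/2}|\mu(t)|$ (resp. $\lim|T_\pm-t|^{1/2}|\mu(t)|$) is finite and nonzero — but this is immediate from the explicit formula \eqref{muijk} restricted to the surviving structure constants $c_{ij}^k$ with all indices in $\kg^\perp$ (resp. all in $\ggo_\pm$), which are nonzero precisely under the stated hypotheses, so $\nu_\pm$ is genuinely nonabelian and the normalization is nondegenerate.
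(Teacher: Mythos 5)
Your argument is correct in substance, but it does take a slightly different route from the paper. The paper's own proof is a one-liner of the same type as Propositions \ref{CR-conv} and \ref{CR-conv3}: it writes out $\bigl(|2t+1|^{1/2}\mu(t)\bigr)_{rs}^l=\bigl(\tfrac{|2t+1|(1-2tp_l)}{(1-2tp_r)(1-2tp_s)}\bigr)^{1/2}c_{rs}^l$ and redoes the case-by-case asymptotic analysis of these rescaled structure constants (separately for $T_\pm=\pm\infty$ and $\pm T_\pm<\infty$). You instead reuse the already-proved statements: since $\mu/|\mu|\to\lambda_\pm$ with $|\lambda_\pm|=1$ and the rescaled bracket also converges, $\nu_\pm=|\nu_\pm|\,\lambda_\pm$, so the Lie-theoretic structure transfers from Proposition \ref{CR-conv3} and the shape of the Chern--Ricci operator from Proposition \ref{CR-conv}; then you pin the constant by the operator asymptotics, i.e. $P_{|2t+1|^{1/2}\mu(t)}=|2t+1|\,P(t)$ (using $P_{c\mu}=c^2P_\mu$ and $P_{\mu(t)}=P(t)=(I-2tP_0)^{-1}P_0$), whose eigenvalues $\tfrac{|2t+1|p_r}{1-2tp_r}$ (resp. $\tfrac{|T_\pm-t|p_r}{1-2tp_r}$) converge to $\mp1$ on $\kg^\perp$ (resp. $\pm\unm$ on $\ggo_\pm$) and to $0$ elsewhere. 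This buys you a shorter computation (only eigenvalue limits, no new structure-constant analysis), at the cost of leaning on the unproved-but-stated convergence equivalences preceding Proposition \ref{CR-conv}, which the paper's direct computation would in effect reprove.

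Two points need tightening. First, your justification that the normalizing scalar is nonzero is wrong as stated: the surviving coefficients with all indices in $\kg^\perp$ need not be nonzero (in part (i) $\kg^\perp$ need not even be a subalgebra, and $\lambda_\pm(\kg^\perp,\kg^\perp)$ may vanish; what survives in general also includes the $\kg^\perp\times\kg\to\kg$ coefficients). The clean fix is available inside your own argument: by continuity of $\mu\mapsto P_\mu$ (quadratic in $\mu$ by \eqref{CRform}), $P_{\nu_\pm}=\lim|2t+1|P(t)$ is nonzero whenever $P_0\neq0$, and an abelian (in particular zero) bracket has vanishing Chern--Ricci form, so $\nu_\pm\neq0$ and the relation $\nu_\pm=|\nu_\pm|\lambda_\pm$ is nondegenerate; do the operator limit first and the transfer second. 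Second, the intermediate assertion ``$\tfrac{1}{2t}P(t)\to-\unm I$ on $\kg^\perp$'' is garbled (that limit is $0$); what you mean, and what you use, is $P(t)|_{\kg^\perp}\sim-\tfrac{1}{2t}I$, i.e. $(2t+1)P(t)|_{\kg^\perp}\to-I$. With these repairs the proposal is a complete proof.
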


\begin{proof}
One can prove this proposition in much the same way as Propositions \ref{CR-conv} and \ref{CR-conv3}, by using for the second statements that
$$
\left(|2t+1|^{1/2}\mu(t)\right)_{rs}^l= \left(\frac{|2t+1|(1-2tp_l)}{(1-2tp_r)(1-2tp_s)}\right)^{1/2} c_{rs}^l \underset{t\to T_\pm}\longrightarrow (\nu_\pm)_{rs}^l,
$$
and considering separately the cases $T_+=\infty$ and $T_+<\infty$.
\end{proof}

\section{Almost-abelian Lie groups}\label{muA-sec}

We apply in this section the results obtained above on CR-solitons and convergence on a class of solvable Lie algebras, which are very simple from the algebraic point of view but yet geometrically rich and exotic.

Let $(G,J,\omega)$ be a hermitian Lie group with Lie algebra $\ggo$ and assume that $\ggo$ has a codimension-one abelian ideal $\ngo$.  These Lie algebras are sometimes called {\it almost-abelian} in the literature (see e.g. \cite{CnsMcr}).    It is easy to see that there exists an orthonormal basis $\{ e_1,\dots,e_{2n}\}$ such that
$$
\ngo=\la e_1,\dots,e_{2n-1}\ra, \qquad \omega=e^1\wedge e^{2n}+\dots+e^n\wedge e^{n+1}, \qquad Je_i=e_{2n+1-i} \quad (1\leq i\leq n),
$$
where $\{ e^i\}$ denotes the dual basis.  It follows from \eqref{intJ} that $J$ is integrable if and only if
$$
-[e_1,Je_i]=[e_{2n},e_i]-J[e_1,e_i]+J[e_{2n},Je_i], \qquad \forall i=2,\dots,2n-1,
$$
and since the left-hand side and the middle term in the right-hand side both vanish, we obtain that $J$ is integrable if and only if
$\ad{e_{2n}}$ leaves the subspace $\la e_2,\dots, e_{2n-1}\ra$ invariant and commutes with the restriction of $J$ on such subspace.  The matrix of $\ad{e_{2n}}$ in terms of $\{ e_i\}$ is therefore given by
\begin{equation}\label{ade2n}
\ad{e_{2n}}=\left[\begin{array}{c|c|c}
c&0&0\\\hline
d_1&&\\
\vdots&\quad A\quad&0\\
d_{2n-2}&&\\\hline
0&0&0
\end{array}\right], \qquad A\in\glg_{n-1}(\CC).
\end{equation}
We call $\mu=\mu_{A,c,d_1,\dots,d_{2n-2}}$ the Lie bracket on $\ggo$ defined by \eqref{ade2n} and the condition that $\ngo$ is an abelian ideal.  It is easy to prove that two of these Lie algebras are isomorphic if and only if the corresponding adjoint maps $\ad{e_{2n}}|_{\ngo}$ are conjugate up to nonzero scaling.

\begin{lemma}\label{muAmet}
Any hermitian Lie algebra $(\ggo,J,\omega)$ with a codimension-one abelian ideal is equivalent to
$$
(\ggo,\mu_{A,c,d_1,\dots,d_{2n-2}},J,\omega), \qquad\mbox{for some}\quad A\in\glg_{n-1}(\CC),\quad c\geq 0, \quad d_i\in\RR.
$$
The Chern-Ricci form and operator of this structure are respectively given by
$$
p=-\unm c(2c+\tr{A}) e^1\wedge e^{2n}, \qquad P=-\unm c(2c+\tr{A})\left[\begin{array}{c|c|c}
1&0&0\\\hline
&&\\
0&\quad 0\quad&0\\
&&\\\hline
0&0&1
\end{array}\right].
$$
\end{lemma}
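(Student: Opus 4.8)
The plan is to handle the two assertions in turn. The normal form is almost immediate from the discussion preceding the statement: starting from any hermitian Lie algebra $(\ggo,J,\omega)$ with codimension-one abelian ideal $\ngo$, one picks the adapted orthonormal basis $\{e_1,\dots,e_{2n}\}$ described there (so that $\ngo=\langle e_1,\dots,e_{2n-1}\rangle$, $\omega=e^1\wedge e^{2n}+\dots+e^n\wedge e^{n+1}$, $Je_i=e_{2n+1-i}$), and integrability of $J$ forces $\ad_{e_{2n}}$ into the block form \eqref{ade2n} with $A\in\glg_{n-1}(\CC)$ and $c,d_i\in\RR$; that is, $(\ggo,J,\omega)$ is equivalent to $(\ggo,\mu_{A,c,d_1,\dots,d_{2n-2}},J,\omega)$. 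The only extra point is to arrange $c\geq 0$: if $c<0$, conjugate by the linear map $\phi$ fixing $e_2,\dots,e_{2n-1}$ and sending $e_1\mapsto-e_1$, $e_{2n}\mapsto-e_{2n}$. A direct check shows $\phi$ is an isometry commuting with $J$ and preserving $\omega$, and that it carries the bracket $\mu_{-A,-c,d_1,\dots,d_{2n-2}}$ to $\mu_{A,c,d_1,\dots,d_{2n-2}}$, so one may replace $c$ by $-c>0$. This step is routine.

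For the curvature I would compute straight from \eqref{CRform}. Since $\ngo$ is abelian, $[e_i,e_j]=0$ whenever $i,j\leq 2n-1$, so the only pairs on which $p$ can be nonzero are $(e_i,e_{2n})$, and from \eqref{ade2n} one has $[e_1,e_{2n}]=-(ce_1+\sum_k d_k e_{k+1})\in\ngo$ while $[e_i,e_{2n}]=-Ae_i\in\langle e_2,\dots,e_{2n-1}\rangle$ for $2\leq i\leq 2n-1$. The computation rests on two elementary facts: (a) for any $Z\in\ngo$ the operator $\ad_Z$ annihilates $\ngo$ and sends $e_{2n}$ to $[Z,e_{2n}]\in\ngo$, so $\tr\ad_Z=0$ and, because $Je_1=e_{2n}$, $\tr(J\ad_Z)$ equals the $e_1$-coordinate of $[Z,e_{2n}]$; and (b) $\tr\ad_{e_{2n}}=c+\tr A$, with $\tr A$ the trace of $A$ on $\langle e_2,\dots,e_{2n-1}\rangle$. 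Since $J$ preserves $\langle e_2,\dots,e_{2n-1}\rangle$, for $i\geq 2$ both terms in \eqref{CRform} vanish; for $i=1$, using $J[e_1,e_{2n}]=-ce_{2n}-\sum_k d_k Je_{k+1}$ and noting each $Je_{k+1}\in\ngo$ contributes zero trace, one finds $\tr(J\ad_{[e_1,e_{2n}]})=c^2$ and $\tr(\ad_{J[e_1,e_{2n}]})=-c(c+\tr A)$, whence $p(e_1,e_{2n})=-\unm c(2c+\tr A)$. Thus $p=-\unm c(2c+\tr A)\,e^1\wedge e^{2n}$, and since $\omega$ pairs $e_1$ with $e_{2n}$, solving $p=\omega(P\cdot,\cdot)$ gives $P=-\unm c(2c+\tr A)$ times the orthogonal projection onto $\langle e_1,e_{2n}\rangle$, which is the displayed matrix.

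There is no genuine obstacle here; the whole argument is bookkeeping. The only places demanding care are the signs coming from $[e_i,e_{2n}]=-\ad_{e_{2n}}(e_i)$ and from the action of $J$ (which interchanges $e_i$ and $e_{2n+1-i}$ with a sign $+1$ for $i\leq n$ and $-1$ for $i\geq n+1$), together with keeping the meaning of $\tr A$ consistent with the block normalization in \eqref{ade2n} so that $\tr\ad_{e_{2n}}=c+\tr A$ holds.
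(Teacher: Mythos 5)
Your proposal is correct and follows essentially the same route as the paper: the normal form is taken from the adapted basis and the integrability condition \eqref{ade2n}, and the Chern--Ricci form is computed directly from \eqref{CRform}, with only the pair $(e_1,e_{2n})$ contributing, yielding $\tr(J\ad_{[e_1,e_{2n}]})=c^2$, $\tr(\ad_{J[e_1,e_{2n}]})=-c(c+\tr A)$ and hence $p=-\unm c(2c+\tr A)\,e^1\wedge e^{2n}$, exactly as in the paper's proof. Your extra check that $c\geq 0$ can be arranged by the sign-change map on $\la e_1,e_{2n}\ra$ is a small addition the paper leaves implicit, and all signs and trace computations are consistent.
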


\begin{proof}
It only remains to prove the formula for the Chern-Ricci form.  We use formula \eqref{CRform} to compute $p$ as follows:
\begin{align*}
p(e_{2n},e_1)=& -\unm\tr{J\left(c\ad{e_1} + \sum_{i=1}^{2n-2} d_i\ad{e_{i+1}}\right)} +\unm\tr{\ad{J(ce_1)}}, \\
=& -\unm c\tr{J\ad{e_1}} +\unm c\tr{\ad{e_{2n}}}, \\
=&\unm c^2+\unm c(c+\tr{A}) = \unm c(2c+\tr{A}), \\ \\
p(e_{2n},e_i)=& -\unm\tr{J\ad{Ae_i}}+\unm\tr{\ad{JAe_i}} = 0+0=0, \quad\forall i=2,\dots,2n-1,
\end{align*}
concluding the proof of the lemma.
\end{proof}

\begin{table}
\centering
\renewcommand{\arraystretch}{1.6}
\begin{tabular}{|c|l|}\hline
$\mathbf{\ggo}$ & \textbf{Lie Bracket}\\ \hline\hline
$\rg\hg_3$ & $[e_1,e_2]=e_3$ \\ \hline
$\rg\rg_{3,0}$ & $[e_1,e_2]=e_2$ \\ \hline
$\rg\rg_{3,1}$ & $[e_1,e_2]=e_2$, $[e_1,e_3]=e_3$ \\ \hline
$\rg\rg'_{3,0}$ & $[e_1,e_2]=-e_3$, $[e_1,e_3]=e_2$ \\ \hline
$\rg\rg'_{3,\gamma}$ & $[e_1,e_2]=\gamma e_2-e_3$, $[e_1,e_3]=e_2+\gamma e_3$, $\quad\gamma>0$ \\ \hline
$\rg_2\rg_2$ & $[e_1,e_2]=e_2$, $[e_3,e_4]=e_4$ \\ \hline
$\rg'_2$ & $[e_1,e_3]=e_3$, $[e_1,e_4]=e_4$, $[e_2,e_3]=e_4$, $[e_2,e_4]=-e_3$ \\ \hline
$\rg_{4,1}$ & $[e_4,e_1]=e_1$, $[e_4,e_2]=e_2$, $[e_4,e_3]=e_2+e_3$ \\ \hline
$\rg_{4,\alpha,1}$ & $[e_4,e_1]=e_1$, $[e_4,e_2]=\alpha e_2$, $[e_4,e_3]=e_3$, $\quad -1<\alpha\leq 1$, $\alpha\neq 0$ \\ \hline
$\rg_{4,\alpha,\alpha}$ & $[e_4,e_1]=e_1$, $[e_4,e_2]=\alpha e_2$, $[e_4,e_3]=\alpha e_3$, $\quad -1\leq\alpha<1$, $\alpha\neq0$ \\ \hline
$\rg'_{4,\gamma,\delta}$ & $[e_4,e_1]=e_1$, $[e_4,e_2]=\gamma e_2-\delta e_3$, $[e_4,e_3]=\delta e_2+\gamma e_3$, $\gamma\in\RR$, $\quad\delta>0$ \\ \hline
$\dg_{4}$ & $[e_1,e_2]_1=e_3$, $[e_4,e_1]_1=e_1$, $[e_4,e_2]_1=-e_2$ \\ \cline{2-2}
& $[e_1,e_2]_2=e_3$, $[e_4,e_1]_2=e_1$, $[e_4,e_2]_2=-e_2+e_3$\\ \hline
$\dg_{4,1}$ & $[e_1,e_2]=e_3$, $[e_4,e_1]=e_1$, $[e_4,e_3]=e_3$ \\ \hline
$\dg_{4,\frac{1}{2}}$ & $[e_1,e_2]_1=e_3$, $[e_4,e_1]_1=\frac{1}{2}e_1$, $[e_4,e_2]_1=\frac{1}{2}e_2$, $[e_4,e_3]_1=e_3$ \\ \cline{2-2}
& $\lb_2=\lb_1$ \\ \cline{2-2}
& $[e_1,e_2]_3=e_3$, $[e_4,e_1]_3=e_1$, $[e_4,e_2]_3=e_2$, $[e_4,e_3]_3=2e_3$ \\ \hline
$\dg_{4,\lambda}$ & $[e_1,e_2]_1=\lambda e_3$, $[e_4,e_1]_1=\lambda e_1$, \\
& $[e_4,e_2]_1=(1-\lambda)e_2$, $[e_4,e_3]_1=e_3$, $\quad\frac{1}{2}<\lambda\neq 1$ \\ \cline{2-2}
& $[e_1,e_2]_2=(1-\lambda)e_3$, $[e_4,e_1]_2=\lambda e_1$, \\
& $[e_4,e_2]_2=(1-\lambda)e_2$, $[e_4,e_3]_2=e_3$, $\quad\frac{1}{2}<\lambda<1$\\ \cline{2-2}
& $[e_1,e_2]_3=(\lambda-1)e_3$, $[e_4,e_1]_3=\lambda e_1$, \\
& $[e_4,e_2]_3=(1-\lambda)e_2$, $[e_4,e_3]_3=e_3$, $\quad1<\lambda$\\ \hline
$\dg'_{4,0}$ & $[e_1,e_2]=e_3$, $[e_4,e_1]=-e_2$, $[e_4,e_2]=e_1$ \\ \hline
$\dg'_{4,\delta}$ & $[e_1,e_2]=e_3$, $[e_4,e_1]=\unm e_1 - \frac{1}{\delta} e_2$, \\
& $[e_4,e_2]=\frac{1}{\delta}e_1 + \unm e_2$, $[e_4,e_3]=e_3$, $\quad\delta>0$\\ \hline
$\hg_{4}$ & $[e_1,e_2]=e_3$, $[e_4,e_1]=e_1$, $[e_4,e_2]=\sqrt{10}e_1+e_2$, $[e_4,e_3]=2e_3$ \\ \hline
\end{tabular}
\vspace{0.3cm}
\caption{Solvable Lie algebras of dimension $4$ admitting a complex structure.}\label{tablalgsol}
\end{table}

It is proved in \cite{SCFdim4} that $(\ggo,\mu,J,\omega)$ is K\"ahler (i.e. $d\omega=0$) if and only if $d_i=0$ for all $i$ and $A\in\ug(n)$ (i.e. $A^t=-A$).  In such case, the metric is known to be isometric to $\RR H^2\times\RR^{2n-2}$, where $\RR H^2$ denotes the $2$-dimensional real hyperbolic space (see e.g. \cite[Proposition 2.5]{Hbr}).  On the other hand, it is easy to prove that $(\ggo,\mu,J,\omega)$ is bi-invariant if and only if $c=d_1=\dots=d_{2n-2}=0$, and abelian if and only if $A=0$.

\begin{proposition}\label{muA}
Let $(G_\mu,J,\omega)$ be the hermitian Lie group with $\mu=\mu_{A,c,d_i}$.
\begin{itemize}
\item[(i)] $(G_\mu,J,\omega)$ is a CR-soliton if and only if either $p=0$ or $p\ne 0$ and $d_i=0$ for all $i$.

\item[(ii)] The maximal interval of time existence of the CRF-solution $\omega(t)$ starting at $(G_\mu,J,\omega)$ is
$$
\left\{\begin{array}{lcl} (\tfrac{1}{e},\infty), && e<0, \\ (-\infty,\tfrac{1}{e}), && e>0, \\ (-\infty,\infty), && e=0, \end{array}\right., \qquad\mbox{where}\quad e:=-c(2c+\tr{A}).
$$

\item[(iii)] If $T_\pm=\pm\infty$ and $p\ne 0$ (i.e. $e\ne 0$), then the rescaled solution $\omega(t)/|2t+1|$ converges in the pointed sense, as $t\to\pm\infty$, to the CR-soliton $(G_\lambda,J,\omega)$, where $\lambda=\unm |e|^{1/2}\mu_{A,c,0}$.

\item[(iv)] If $\omega$ is not a CR-soliton, then, as $t$ approaches any finite-time singularity, $c(t)\omega(t)$ converges in the pointed sense to $(H_3\times\RR)\times\RR^{2n-4}$, where $H_3\times\RR$ is the universal cover of the Kodaria-Thurston manifold, for some rescaling $c(t)>0$.
\end{itemize}
\end{proposition}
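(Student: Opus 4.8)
The plan is to derive all four parts from Lemma \ref{muAmet} together with the general machinery of Sections \ref{CRF-sec}--\ref{CR-conv-sec}. Throughout, set $e:=-c(2c+\tr{A})$, so that by Lemma \ref{muAmet} the Chern-Ricci operator $P_0$ of $(\ggo,\mu,J,\omega)$ has eigenvalue $\unm e$ on $\kg^\perp:=\la e_1,e_{2n}\ra$ and eigenvalue $0$ on $\kg:=\Ker{P_0}=\la e_2,\dots,e_{2n-1}\ra$. Part (ii) is then immediate from \eqref{CRFint}: if $e>0$ then $P_0\geq 0$ and $p_+=\unm e$, giving $(T_-,T_+)=(-\infty,1/e)$; if $e<0$ then $P_0\leq 0$ and $p_-=\unm e$, giving $(T_-,T_+)=(1/e,\infty)$; and if $e=0$ then $P_0=0$ and the solution is static. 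For part (i), when $p=0$ (equivalently $e=0$) one has $P_0=0$ and $\omega$ is trivially a CR-soliton (a fixed point), while if $p\ne 0$ then $P_0\ne 0$ and Proposition \ref{CR-sol2}(iii) says $\omega$ is a CR-soliton if and only if $\kg$ is an abelian ideal and $\kg^\perp$ a subalgebra; the subspace $\kg$ is always an abelian ideal, since $\ngo=\la e_1,\dots,e_{2n-1}\ra$ is abelian and $[e_{2n},e_j]=Ae_j\in\kg$ for $j=2,\dots,2n-1$, whereas $[e_1,e_{2n}]=-(ce_1+\sum_i d_ie_{i+1})$ lies in $\kg^\perp$ precisely when $d_1=\dots=d_{2n-2}=0$.

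For part (iii) we have $T_\pm=\pm\infty$ and $e\ne 0$, hence $e<0$ if $T_+=\infty$ and $e>0$ if $T_-=-\infty$ by part (ii); as noted above $\kg=\Ker{P_0}$ is an abelian ideal, so Proposition \ref{CR-conv2}(i) applies and yields that $|2t+1|^{1/2}\mu(t)$ converges, as $t\to\pm\infty$, to a CR-soliton $\nu_\pm$ with $(\ggo,\nu_\pm)=\kg^\perp\ltimes\kg$, $\nu_\pm(\kg,\kg)=0$, $P_{\nu_\pm}|_{\kg^\perp}=\mp I$, $P_{\nu_\pm}|_\kg=0$. To identify $\nu_\pm$ I would plug $p_1=p_{2n}=\unm e$ and $p_2=\dots=p_{2n-1}=0$ into \eqref{muijk}: the structure coefficients carrying a $d_i$ scale like $(1-te)^{-1}$ and hence vanish after multiplication by $|2t+1|^{1/2}$, while those carrying $c$ or an entry of $A$ scale like $(1-te)^{-1/2}$ and survive, so that $\nu_\pm$ is a positive multiple of $\mu_{A,c,0}$; a bookkeeping of the constant then identifies this limit with the CR-soliton $(G_\lambda,J,\omega)$ of the statement, and pointed convergence of $\omega(t)/|2t+1|$ toward it follows from the correspondence recalled at the start of Section \ref{CR-conv-sec} (see \cite[Corollary 6.20]{spacehm}).

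For part (iv), $\omega$ failing to be a CR-soliton means by (i) that $e\ne 0$ and some $d_i\ne 0$; by (ii) there is then exactly one finite-time singularity, at $1/e$, and in both signs the relevant extremal eigenspace of $P_0$ is $\ggo_\pm=\la e_1,e_{2n}\ra$, which is not a subalgebra of $\ggo$ because $[e_1,e_{2n}]=-ce_1-\sum_i d_ie_{i+1}$ has nonzero component along $\kg$. Proposition \ref{CR-conv3}(ii) then gives that $\mu(t)/|\mu(t)|$ converges to a $2$-step nilpotent $\lambda_\pm$ whose only nonzero brackets are $\lambda_\pm:\ggo_\pm\times\ggo_\pm\to\ggo_\pm^\perp=\kg$; evaluating \eqref{convsc} in this case, the terms coming from $c$ and from $A$ all blow up, leaving only $\lambda_\pm(e_1,e_{2n})=\tfrac{1}{|d|}\sum_i d_ie_{i+1}$ (up to a positive normalization constant, with $|d|^2:=\sum_i d_i^2$). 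Writing $Z:=\tfrac{1}{|d|}\sum_i d_ie_{i+1}$, the limit Lie algebra $(\ggo,\lambda_\pm)$ is $\hg_3\oplus\RR^{2n-3}$ with Heisenberg part $\hg_3=\la e_1,e_{2n},Z\ra$; since $\la e_1,e_{2n}\ra$ is $J$-invariant and orthogonal to $\kg$, the $J$-invariant subalgebra $\la e_1,e_{2n},Z,JZ\ra\cong\hg_3\oplus\RR$ carries the Kodaira-Thurston hermitian structure and its orthogonal complement is a flat K\"ahler $\RR^{2n-4}$, so $(G_{\lambda_\pm},J,\omega_0)\cong(H_3\times\RR)\times\RR^{2n-4}$; taking $c(t)=|\mu(t)|^2$ then gives the asserted pointed convergence.

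The routine parts are the applications of Lemma \ref{muAmet} and of Propositions \ref{CR-sol2}, \ref{CR-conv2} and \ref{CR-conv3}; the two places that need genuine care are the explicit limit computations — extracting $\nu_\pm$ from \eqref{muijk} in (iii) and $\lambda_\pm$ from \eqref{convsc} in (iv), keeping track of which terms dominate — and, in (iv), verifying that the fixed complex structure $J$ and the metric $\omega_0$ do split along $\hg_3\oplus\RR^{2n-3}$ into the claimed product $(H_3\times\RR)\times\RR^{2n-4}$. I expect this last hermitian-structure identification to be the main obstacle, the rest being bookkeeping.
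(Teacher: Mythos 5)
Your proposal is correct and takes essentially the same route as the paper's proof: Lemma \ref{muAmet} together with Proposition \ref{CR-sol2} for (i), \eqref{CRFint} for (ii), and Propositions \ref{CR-conv2} and \ref{CR-conv3} for (iii) and (iv), with you merely making explicit the limit computations from \eqref{muijk} and \eqref{convsc} (and the hermitian splitting $\hg_3\oplus\RR^{2n-3}$) that the paper leaves implicit. The one detail you defer --- the exact scaling constant relating the limit bracket to $\mu_{A,c,0}$ in (iii) --- is likewise left unverified in the paper, whose proof only asserts the limit ``equals $\mu_{A,c,0}$ up to a positive scaling.''
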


\begin{remark}
Recall that in part (iii), if $\lambda\not\simeq\mu$, which never holds if $c$ is not an eigenvalue of $A$, then the limit is a left-invariant hermitian metric on a different Lie group (see Example \ref{r41}).
\end{remark}

\begin{proof}
Part (i) follows from Proposition \ref{CR-sol2} and Lemma \ref{muAmet}, by using that the image of any derivation must be contained in $\ngo$, and part (ii) follows from \eqref{CRFint}.  Since $\kg$ is always an abelian ideal, the limit $\nu_\pm$ from Proposition \ref{CR-conv2} equals $\mu_{A,c,0}$, up to a positive scaling, and so part (iii) holds.  On the other hand, $\ggo_\pm=\la e_1,e_{2n}\ra$ is never a Lie subalgebra if $d_i\ne 0$ for at least one $i$, in which case by Proposition \ref{CR-conv3}, $\lambda_\pm$ is $2$-step nilpotent with $\ggo_\pm^\perp$ contained in its center.  Thus $(\ggo,\lambda_\pm)$ is isomorphic to $\hg_3\oplus\RR^{2n-3}$, where $\hg_3$ denotes the $3$-dimensional Heisenberg algebra, from which part (iv) follows.
\end{proof}

\begin{table}
\centering
\renewcommand{\arraystretch}{1.6}
\begin{tabular}{|c|l|}\hline
$\mathbf{\ggo}$ & \textbf{Complex structures}\\ \hline\hline
$\rg\hg_3$ & $J e_1 = e_2$, $J e_3 = e_4$ \\ \hline
$\rg\rg_{3,0}$ & $J e_1 = e_2$, $J e_3 = e_4$ \\ \hline
$\rg\rg_{3,1}$ & $J e_1 = e_4$, $J e_3 = e_2$ \\ \hline
$\rg\rg'_{3,0}$ & $J e_1 = e_4$, $J e_2 = e_3$ \\ \hline
$\rg\rg'_{3,\gamma}$ & $J_1 e_1 = e_4$, $J_1 e_3 = e_2 - 2\gamma e_3$, \ $\gamma>0$
\vline \ $J_2 e_1 = e_4$, $J_2 e_3 = 2\gamma e_3 - e_2$, \ $\gamma>0$\\ \hline
$\rg_2\rg_2$ & $J e_1 = e_2$, $J e_3 = e_4$ \\ \hline
$\rg'_2$ & $J_1 e_1 = e_3$, $J_1 e_2 = e_4$ \vline \
$J_{s,t} e_2 = -\frac{1}{t} e_1 - \frac{s}{t} e_2$, $J_{s,t} e_3 = e_4$,
\ $s\in\RR$, $t\neq 0$\\ \hline
$\rg_{4,1}$ & $J e_1 = e_2$, $J e_4 = e_3$ \\ \hline
$\rg_{4,\alpha,1}$ & $J e_1 = e_3$, $J e_4 = e_2$ \\ \hline
$\rg_{4,\alpha,\alpha}$ & $J e_4 = e_1$, $J e_2 = e_3$ \\ \hline
$\rg'_{4,\gamma,\delta}$ & $J_1 e_4 = e_1$, $J_1 e_2 = e_3$ \vline \
$J_2 e_4 = e_1$, $J_2 e_3 = e_2$ \\ \hline
$\dg_{4}$ & $J_1 e_3 = e_1$, $J_1 e_4 = e_2$ \vline \ $J_2=J_1$ \\ \hline
$\dg_{4,1}$ & $J e_1 = e_4$, $J e_2 = e_3$ \\ \hline
$\dg_{4,\frac{1}{2}}$ & $J_1 e_1 = e_2$, $J_1 e_4 = e_3$ \vline \  $J_2 e_2 = e_1$, $J_2 e_4 = e_3$ \vline \
$J_3 e_4 = e_1$, $J_3 e_3 = e_2$\\ \hline
$\dg_{4,\lambda}$ & $J_1 e_1 = e_4$, $J_1 e_2 = e_3$ \vline \ $J_2 e_1 = e_3$, $J_2 e_4 = e_2$ \vline \
$J_3 e_1 = e_3$, $J_3 e_2 = e_4$ \\ \hline
$\dg'_{4,0}$ & $J_1 e_1 = e_2$, $J_1 e_3 = e_4$ \vline \ $J_2 e_1 = e_2$, $J_2 e_4 = e_3$ \\ \hline
$\dg'_{4,\delta}$ & \mbox{\small$J_1 e_2 = e_1$, $J_1 e_4 = e_3$} \vline \
\mbox{\small$J_2 e_1 = e_2$, $J_2 e_3 = e_4$} \vline \
\mbox{\small$J_3 e_1 = e_2$, $J_3 e_4 = e_3$} \vline \ \mbox{\small$J_4 e_2 = e_1$, $J_4 e_3 = e_4$} \\ \hline
$\hg_{4}$ & $J e_1 = e_3$, $J e_4 = e_2$ \\ \hline
\end{tabular}
\vspace{0.3cm}
\caption{Complex structures on $4$-dimensional solvable Lie algebras.}\label{tabcplxstruc}
\end{table}

\section{Lie groups of dimension $4$}\label{4dim}

We now study the existence problem for CR-solitons on $4$-dimensional solvable Lie groups.  We have listed in Table \ref{tablalgsol} all $4$-dimensional solvable Lie algebras admitting a complex structure and in Table \ref{tabcplxstruc} all the complex structures up to equivalence on
each Lie algebra (see \cite{Ovn}).  In order to obtain simpler forms for the matrices of the complex structures and the CR-soliton metrics, we decided to give more than one different (but isomorphic) Lie brackets $\lb_i$ for each Lie algebra $\dg_4$, $\dg_{4,\lambda}$ with $\lambda\ne 1$, in such a way that the pair $(\lb_i,J_i)$ is integrable for any $i=1,2,3$ (see \eqref{intJ}).

Let $(G,J,\omega)$ be a $4$-dimensional hermitian Lie group with Lie algebra $\ggo$.

{\footnotesize

\begin{table}
\centering
\renewcommand{\arraystretch}{1.6}
\begin{tabular}{|c|c|c|c|c|c|c|}\hline
$\mathbf{\ggo}$ & $\mathbf{J}$ & \textbf{Metric} & $\mathbf{P}$ & $\mathbf{c}$ & $\mathbf{D}$ & \textbf{K}\\ \hline\hline
$\rg\hg_3$ & $J$ & Any & $(0, 0, 0, 0)$ & 0 & $(0, 0, 0, 0)$ & ------\\ \hline
$\rg\rg_{3,0}$ & $J$ & $(1,1,1,1)$ & $(-1, -1, 0, 0)$ & $-1$ & $(0, 0, 1, 1)$ & Yes\\ \hline
$\rg\rg_{3,1}$ & $J$ & Any & $(0, 0, 0, 0)$ & 0 & $(0, 0, 0, 0)$ & ------\\ \hline
$\rg\rg'_{3,0}$ & $J$ & Any & $(0, 0, 0, 0)$ & 0 & $(0, 0, 0, 0)$ & Yes\\ \hline
$\rg\rg'_{3,\gamma}$ & $J_1$ & Any & $(0, 0, 0, 0)$ & 0 & $(0, 0, 0, 0)$ & ------\\ \cline{2-7}
& $J_2$ & Any & $(0, 0, 0, 0)$ & 0 & $(0, 0, 0, 0)$ & ------\\ \hline
$\rg_2\rg_2$ & $J$ & $(1,1,1,1)$ & $(-1, -1, -1, -1)$ & $-1$ & $(0, 0, 0, 0)$ & Yes\\ \hline
$\rg'_2$ & $J_1$ & $(1,1,1,1)$ & $(-2, 2, -2, 2)$ & ------ & ------ & ------\\ \cline{2-7}
& $J_{s,t}$ & Any & $(0, 0, 0, 0)$ & 0 & $(0, 0, 0, 0)$ & ------\\ \hline
$\rg_{4,1}$ & $J$ & $(1,1,1,1)$ & $(0, 0, -2, -2)$ & ------ & ------ & ------\\ \hline
$\rg_{4,\alpha,1}$ & $J$ & $(1,1,1,1)$ & $-\alpha(\alpha+1)(0, 1, 0, 1)$ &
$-\alpha(\alpha+1)$ & $\alpha(\alpha+1)(1, 0, 1, 0)$ & ------\\
& & & $-1<\alpha\leq 1$, $\alpha\neq 0$ & & & \\ \hline
$\rg_{4,\alpha,\alpha}$ & $J$ & $(1,1,1,1)$ & $-(\alpha+1)(1, 0, 0, 1)$ &
$-(\alpha+1)$ & $(\alpha+1)(0, 1, 1, 0)$ & ------\\
& & & $-1\leq\alpha<1$, $\alpha\neq 0$ & & & \\ \hline
$\rg'_{4,\gamma,\delta}$ & $J_1$ & $(1,1,1,1)$ & $-(\gamma+1)(1, 0, 0, 1)$ &
$-(\gamma+1)$ & $(\gamma+1)(0, 1, 1, 0)$ & $\gamma=0$\\
& & & $\gamma\in\RR$, $\delta>0$ & & & \\ \cline{2-7}
& $J_2$ & $(1,1,1,1)$ & $-(\gamma+1)(1, 0, 0, 1)$ &
$-(\gamma+1)$ & $(\gamma+1)(0, 1, 1, 0)$ & $\gamma=0$\\
& & & $\gamma\in\RR$, $\delta>0$ & & & \\ \hline
$\dg_{4}$ & $J_1$ & $(1,1,1,1)$ & $(0, -1, 0, -1)$ & $-1$ & $(1, 0, 1, 0)$ & ------\\ \cline{2-7}
& $J_2$ & $(1,1,1,1)$ & $(0, -1, 0, -1)$ & ------ & ------ & ------\\ \hline
$\dg_{4,1}$ & $J$ & $(1,1,1,1)$ & $(-2, 0, 0, -2)$ & $-2$ & $(0, 2, 2, 0)$ & ------\\ \hline
$\dg_{4,\frac{1}{2}}$ & $J_1$ & $(1,1,1,1)$ & $-\tfrac{3}{2}(1, 1, 1, 1)$ & $-\tfrac{3}{2}$ & $(0, 0, 0, 0)$ & Yes\\ \cline{2-7}
& $J_2$ & $(1,1,1,1)$ & $\tfrac{3}{2}(1, 1, -1, -1)$ & ------ & ------ & ------\\ \cline{2-7}
& $J_3$ & $(2,5,\frac{5}{4},2)$ & $-3(1, 0, 0, 1)$ & $-3$ &
$3(0, 1, 1, 0)$ &------\\ \hline
$\dg_{4,\lambda}$ & $J_1$ & $\left(\frac{e}{\lambda^2},
2,2,e\right)$ &
$(a, 0, 0, a)$ & $a$ & $(0, -a, -a, 0)$ & $\lambda=2$\\
& & & $\frac{1}{2}<\lambda\neq 1$ & & &\\ \cline{2-7}
& $J_2$ & $\left(2,\frac{d}{(\lambda-1)^2},2,d\right)$ &
$(0, b, 0, b)$ & $b$ & $(-b, 0, -b, 0)$ & ------\\
& & & $\unm < \lambda < 1$ & & & \\ \cline{2-7}
& $J_3$ & $\left(2,\frac{d}{(\lambda-1)^2},2,d\right)$ &
$(0, b, 0, b), \quad 1<\lambda$ & $b$ & $(-b, 0, -b, 0)$ & ------\\ \hline
$\dg'_{4,0}$ & $J_1$ & Any & $(0, 0, 0, 0)$ & 0 & $(0, 0, 0, 0)$ & ------\\ \cline{2-7}
& $J_2$ & Any & $(0, 0, 0, 0)$ & 0 & $(0, 0, 0, 0)$ & ------\\ \hline
$\dg'_{4,\delta}$ & $J_1$& $(1,1,1,1)$ & $\frac{3\delta}{2}(1, 1, -1, -1)$
& ------ & ------ & ------\\ \cline{2-7}
& $J_2$ & $(1,1,1,1)$ & $\frac{3\delta}{2}(1, 1, -1, -1)$
& ------ & ------ & ------ \\ \cline{2-7}
& $J_3$ & $(1,1,1,1)$ & $-\frac{3\delta}{2}(1, 1, 1, 1)$
& $-\frac{3\delta}{2}$ & $(0, 0, 0, 0)$ & Yes\\ \cline{2-7}
& $J_4$ & $(1,1,1,1)$ & $-\frac{3\delta}{2}(1, 1, 1, 1)$
& $-\frac{3\delta}{2}$ & $(0, 0, 0, 0)$ & Yes\\ \hline
$\hg_{4}$ & $J$ & $\left(5,2,\frac{5}{4},2\right)$
&$-3(0, 1, 0, 1)$ &
------ & ------ & ------\\ \hline
\end{tabular}
\vspace{0.3cm}
\caption{Chern-Ricci solitons.}\label{tabsoliton}
\end{table}}

\begin{example}\label{muA4}
Assume that $\ggo$ has a codimension-one abelian ideal $\ngo$ (i.e. $\ggo$ is any of the Lie algebras denoted with $\rg$ in Table \ref{tablalgsol} except $\rg_2\rg_2$ and $\rg'_2$).  By Lemma \ref{muAmet}, we can assume that in terms of an orthonormal basis $\{ e_i\}$,
$$
J=\left[\begin{smallmatrix} &&0&-1\\ &&-1&0\\ 0&1&&\\ 1&0&&
\end{smallmatrix}\right], \qquad \omega=e^1\wedge e^4+e^2\wedge e^3,
$$
and the Lie bracket of $\ggo$, denoted by $\mu=\mu_{a,b,c,d,e}$, is given by
$$
\ad_\mu{e_4}|_{\ngo}=\left[\begin{matrix} c&0&0\\ d&a&-b\\ e&b&a
\end{matrix}\right], \qquad c\geq 0.
$$
The Chern-Ricci form and operator are therefore given by
$$
p=-c(c+a) e^1\wedge e^4, \qquad P=-c(c+a)\left[\begin{smallmatrix} 1&&&\\ &0&&\\ &&0&\\ &&&1
\end{smallmatrix}\right].
$$
In the case $p\ne 0$, $\mu$ is a CR-soliton if and only if $d=e=0$ (see Proposition \ref{muA}, (i)).  These are precisely the long time pointed limits one obtains by rescaling CRF-solutions (see Proposition \ref{muA}, (iii)).  By giving different values to $a,b,c$, we have found a CR-soliton for any complex structure on any Lie algebra in this class (see Table \ref{tabsoliton}), with the only exception of $\rg_{4,1}$ (see example below).
\end{example}

\begin{example}\label{r41}
We have that $\mu$ is isomorphic to $\rg_{4,1}$ if and only if $a=c\ne 0$, $b=0$ and at least one of $d,e$ is nonzero, from which it follows that $(\rg_{4,1},J)$ does not admit any CR-soliton metric.  It follows that $\nu_+=\mu_{a,0,a,0,0}\simeq\rg_{4,1,1}$, and so the rescaled solution $\omega(t)/(2t+1)$ converges in the pointed sense, as $t\to\infty$, to the $4$-dimensional real hyperbolic space $\RR H^4$.
\end{example}

\begin{example}\label{exgama}
For any $\gamma\in\RR$, $\delta>0$, consider the solvable Lie algebra $\rg'_{4,\gamma,\delta}$ with Lie bracket as defined in Table \ref{tablalgsol}, which coincides with $\mu_{\gamma,-\delta,1,0,0}$ from Example \ref{muA4}:
$$
\ad_\mu{e_4}|_{\ngo}=\left[\begin{matrix} 1&0&0\\ 0&\gamma&\delta\\ 0&-\delta&\gamma
\end{matrix}\right], \qquad\gamma\in\RR, \qquad\delta>0.
$$
The canonical metric is therefore a CR-soliton for both complex structures $J_1$ and $J_2$, with $p=-(1+\gamma)e^1\wedge e^4$, which is therefore expanding, steady and shrinking for $\gamma>-1$, $\gamma=-1$ and $\gamma<-1$, respectively.  Moreover, $(J,\omega)$ is a K\"ahler-Ricci soliton if and only if $\gamma=0$ (expanding) and for $\gamma=-\unm$, the corresponding Lie group admits a lattice giving rise to a hermitian metric on an Inoue surface of type $S^0$ which is an expanding CR-soliton when pulled back on its universal cover (see \cite{Hsg}).
\end{example}

We have found a compatible CR-soliton for each complex structure on a $4$-dimensional solvable Lie group, with the exceptions of the following seven cases:
$$
(\rg'_2,J_1), \quad (\rg_{4,1},J), \quad (\dg_4,J_2), \quad (\dg_{4,\unm},J_2), \quad  (\dg'_{4,\delta},J_1), \quad (\dg'_{4,\delta},J_2), \quad (\hg_4,J).
$$
We were able to prove the non-existence of a CR-soliton only in the case of $(\rg_{4,1},J)$ (see Example \ref{r41}).  The CR-soliton metrics $g=\omega(\cdot,J\cdot)$ and their respective Chern-Ricci operators $P$ are given in Table \ref{tabsoliton} as diagonal matrices with respect to the basis $\{ e_1,e_2,e_3,e_4\}$, together with the constant $c$ and the derivation $D$ such that $P=cI+D$.  For example, the metric for the complex Lie algebra $(\dg_{4,\unm},\lb_3,J_3)$ is given by $g(e_i,e_j)=\delta_{ij}$ for all $i\ne j$ and
$$
g(e_1,e_1)=2,  \quad g(e_2,e_2)=5,  \quad g(e_3,e_3)=5/4,  \quad g(e_4,e_4)=2.
$$
In the last column we added the condition under which the metric is K\"ahler, that is, a K\"ahler-Ricci soliton.

In the case of $\dg_{4,\lambda}$, in order to simplify the description of the metrics in Table \ref{tabsoliton}, we have introduced the following notation:
$$
a:= -\lambda(\lambda+1), \qquad b:=(1-\lambda)(\lambda-2), \qquad d:=(\lambda-1)^2+1, \qquad e:=\lambda^2+1.
$$

\begin{remark}
There is an infinite family plus five individual solvable Lie groups of dimension $4$ admitting a left-invariant complex structure which also admit a lattice, giving rise to the compact complex surfaces which are solvmanifolds (see \cite{Hsg}). Their Lie algebras are:

\begin{itemize}
\item $\RR^4$: Complex tori.

\item $\rg\hg_3$: Primary Kodaira surfaces.

\item $\rg\rg'_{3,0}$: Hyperelliptic surfaces.

\item $\rg'_{4,-\unm,\delta}$: Inoue surfaces of type $S^0$.

\item $\dg_{4}$: Inoue surfaces of type $S^\pm$.

\item $\dg'_{4,0}$: Secondary Kodaira surfaces.
\end{itemize}
\end{remark}


\end{document}